\theoremstyle{plain}
\newtheorem{thm}{Theorem}[section]
\newtheorem{prop}[thm]{Proposition}
\newtheorem{lem}[thm]{Lemma}
\newtheorem{cor}[thm]{Corollary}
\newtheorem{fct}[thm]{Fact}
\newtheorem{claim}[thm]{Claim}
\newtheorem{question}[thm]{Question}
\theoremstyle{definition}
\newtheorem{defn}[thm]{Definition}
\newtheorem{exmp}[thm]{Example}
\newtheorem{rmk}[thm]{Remark}
\newcommand{\ovl}{\overline}
\newcommand{\sset}{\subset}
\newcommand{\ssetq}{\subseteq}
\newcommand{\restr}{\upharpoonright}
\newcommand{\tit}{\textit}
\newcommand{\mV}{\mathcal{V}}
\newcommand{\mW}{\mathcal{W}}
\newcommand{\mVW}{\mathcal{V}\times\mathcal{W}}
\newcommand{\WAV}{\mW\times_{\mathbb{A}^1}\mV}
\newcommand{\WSV}{\mW\times_{S}\mV}
\renewcommand{\phi}{\varphi}
\newcommand{\gen}[1]{\left\langle#1\right\rangle}
\newcommand{\indep}[1][]{\mathop{\raisebox{-.9ex}{$\underset{#1}{\smile}$}\makebox[-2.3ex]{$\mid$}\hspace{2.3ex}}} 
\title{A Note on Integrability and Internality in DCF$_0$}
\author{J. Nagloo$^1$, D. Penazzi$^2$\\  University of Leeds}
\date{}
\def\subjclass#1{{\renewcommand{\thefootnote}{}%
\footnote{\emph{2010 Mathematics Subject Classification.} #1}}}
\begin{document}

\maketitle 

\subjclass{Primary 34M15, 14H70, 03C98}

\begin{abstract}
We investigate the relationship between algebraic integrability and the model theoretic notion of internality. Our main result give a geometric account of almost internality and indeed we show that this notion correspond in a reasonable way to having enough ``good'' first intergrals.
\end{abstract}
\footnotetext[1]{Supported by an EPSRC Project Studentship and a University of Leeds - School of Mathematics partial scholarship}
\footnotetext[2] {Supported by a postdoctoral fellowship on EPSRC grant EP/I002294/1}

\section{Introduction}

The notion of integrability is used in many area of mathematics but it is not unique, there is in fact a cloud of possible definitions, often based on the context we start with (e.g. if the equation is in Hamiltonian form, or if there exists a Poisson bracket). Our context will be that of algebraic differential equations, and the notion of integrability will be that of having ``enough'' first integrals.

In the model theory of differential fields we have suitable candidates for a definition of integrability in the notions of internality and almost internality of the solution set to the constants. Very little has been done in the direction of understanding such notions from the point of view of other mathematical disciplines, some results are in the work of  Chatzidakis-Hrushovski \cite{Chat} and Pillay \cite{Pillay1}, but no work in the literature is completely devoted to this issue. This article aims to fill this gap and give a geometric description of what does it mean for the solutions of a differential equation to be (almost) internal to the constants in a language understandable by the general mathematician.

We begin by recalling briefly some model theory; we direct the reader to \cite{Marker} for more details. We fix the language $L_{\partial}$=($+,-,\cdot,0,1,\partial$) of differential fields. $DCF_0$, the theory of differentially closed fields of characteristic 0 is given by the axioms of differential fields together with the axioms saying that for a differential field $K$, any finite system of differential polynomial equations over $K$ with a solution in some extension of $K$ already has a solution in $K$. $DCF_0$ is complete, has quantifier elimination, is $\omega$-stable and has elimination of imaginaries. A differentially closed field is precisely a model of $DCF_0$. For $K$ a differentially closed field, the basic objects of interest are affine differential algebraic varieties (or Kolchin closed sets): common solution sets in $K^n$ of finite systems of differential polynomial equations over $K$. Quantifier elimination for $DCF_0$ implies that, up to finite boolean combinations, the Kolchin closed sets are precisely the definable sets in $K$. If a definable set $Y$ of $K^n$ is defined by $L_{\partial}$-formulas with parameters from a differential subfield $k$ of $K$ we will say $Y$ is defined over $k$.\\

We fix a universal domain (in model-theoretic terms, a saturated model) $(\mathcal{U},\partial)$ of $DCF_0$ containing the field of complex number; this is possible by $\omega$-stability, we refer to Theorem 4.3.15 of \cite{Marker} for details. We denote its field of constants by $\mathcal{C}$. Throughout the article $t$ will denote an element of $\mathcal{U}$ with the property that $\partial t=1$. If $Y\subseteq\mathcal{U}^n$ is definable (over a differential subfield $k$ say), then $Y$ is said to be \emph{finite-dimensional} if the order of $Y$, $ord(Y)=sup\{tr.deg(k\gen{y}/k):y\in Y\}$, is finite. Here $k\gen{y}$ denote the differential subfield of $\mathcal{U}$ generated by $k$ and $y$, i.e. $k(y,\partial y,\partial^2 y,\ldots)$.\\

Given $A\subset\mathcal{U}^n$, we say that $a\in \mathcal{U}$ is (model-theoretically) \emph{algebraic over} $A$ if there is a $L_{\partial}$-formula $\varphi(x)$ with parameter from $A$ such that $\varphi(a)$ is true in $\mathcal{U}$ and $\varphi(\mathcal{U})$ is finite. In the case where $\varphi(\mathcal{U})=\{a\}$, we say that $a$ is \emph{definable over} $A$. We denote by $acl(A)$ (resp. $dcl(A)$) the set of all elements of $\mathcal{U}$ which are algebraic (resp. definable) over $A$. It is a nontrivial fact that $dcl(A)$ is the differential subfield of $\mathcal{U}$ generated by $A$ and that $acl(A)$ is $dcl(A)^{alg}$, the field-theoretic algebraic closure of $dcl(A)$.\\ 

Let $X$ and $Y$ be $\emptyset$-definable sets in $\mathcal{U}$. Then:
\begin{enumerate}
\item $X$ is {\em internal} to $Y$ if there is a definable surjective map $f$ from $Y^m$ to $X$ (for some $m$).
\item $X$ is {\em almost internal} to $Y$ if there is a definable relation $R\subset Y^m\times X$ such that for any $x\in X$, there are only finitely many, but at least one, $y\in Y^m$ such that $R(y,x)$.
\end{enumerate}

Note that the definable function/relation might be definable with some extra parameters. So sometimes, we also say that $X$ is internal (resp. almost internal) to $Y$ over $A\subset\mathcal{U}$ if the definable function (resp. definable relation) is defined over $A$.

Internality plays an important role in the model theoretic treatment of differential Galois theory. Indeed for any $\omega$-stable theory and for any $\emptyset$-definable sets $X$ and $Y$ such that $X$ is {\em internal} to $Y$, if we let $Gal(X/Y)$ be the group of permutation of $X$ induced by automorphisms of a universal model of the theory which fix $Y$ pointwise, then there is a $\emptyset$-definable group $G$ and a $\emptyset$-definable group action of $G$ on $X$ which is isomorphic to the action of $Gal(X/Y)$ on $X$. In $DCF_0$ this gives a very general treatment of the Picard-Vessiot theory and the Kolchin theory (and more): analogues of Galois theory for differential fields. We direct the reader to the survey \cite{Pillay4} and references therein, for more details.
In this paper, though, we do not deal with Galois theory.\\
 As we shall see soon, we will work birationally, using the weaker notion of "generic" internality/almost internality. 
\vskip 0.3 cm

The main result (Theorem \ref{mainth}) can be restated roughly as follows (see Theorem \ref{restthm}): a differential algebraic variety of order $n$, viewed as a $\mathbb{C}(t)$-definable set in $\mathcal{U}$ ($\partial t=1$), is generically almost internal to the constants if and only if the corresponding differential equation in $y(t)$: $F(t,y,y',\dots,y^{(n)})=0$, has $n$ independent first integrals that are rational functions in $t,y,y',\dots,$ $y^{(n-1)},a_1,\dots,a_k$, where $a_1,\dots,a_k$ are some particular solutions of the differential equation.
\vskip 0.3 cm
In the next two sections (2 and 3) we will introduce all the needed notions. Section $4$ contains the statement and proof of our main result, while in the last section we give some further results and remarks.

\section{Algebraic $D$-varieties.}
Although Kolchin's differential algebraic geometry, the study of (not necessarily affine) differential algebraic varieties, is very similar to the model theory of differential field, we need to work in a more geometric context to find a connection with integrability. 
Thus we use the alternative but almost equivalent notion of a $D$-variety introduced by Buium in \cite{Buium}. The category of $D$-varieties is in fact birationally equivalent to the category of finite dimensional differential algebraic varieties.

Let $K$ be a differential field of characteristic zero and let $V\subseteq \mathcal{U}^n$ be a (not necessarily irreducible) affine algebraic variety defined over $K$. The {\em shifted tangent bundle} is defined to be
\begin{equation*}
T_{\partial}(V)=\left\{(a,u)\in \mathcal{U}^{2n}\; :\; a\in V, \sum_{i=1}^n \frac{\partial P}{\partial x_i}(a)u_i+ P^{\partial}(a)=0 \;\;for\; p\in I(V)\right\}
\end{equation*}
where $I(V)\subset K\{x_1,\ldots,x_n\}$ is the ideal of $V$ and $ P^{\partial}$ is the polynomial obtained by differentiating the coefficients of $P$. For an arbitrary abstract variety $V$, we take a covering of $V$ by affine opens $U_i$ and piece together the shifted tangent bundles $T_{\partial}(U_i)$ using the obvious transition maps to obtain $T_{\partial}(V)$. A {\em shifted vector field} on $V$ over $K$ is just a morphism $s: V\rightarrow T_{\partial}(V)$ which is also a section of the canonical projection $\pi: T_{\partial}(V)\rightarrow V$. Clearly, if $V$ is defined over $\mathcal{C}$, then  $T_{\partial}(V)$ is just the tangent bundle, $T(V)$, of $V$. In this case $s$ is a ``usual" vector field, i.e. a morphism  $V \rightarrow T(V)$.

A pair $(V,s)$, where $s$ is a polynomial, is called an {\em algebraic $D$-variety} over $K$ and if $s$ is a rational section, we refer to the pair as a {\em rational $D$-variety}.

\begin{rmk}
Buium's original definition is actually the following: let $V$ be an algebraic variety, defined over a differential field $K$, a structure on $V$ of an algebraic $D$-variety over $K$ is given by an extension $D_V$ of $\partial$ to a derivation of the structure sheaf $\mathcal{O}_V$. The the pair $(V,D_V)$ is then an algebraic $D$-variety over $K$.\\
It is not hard to see that $D_V$ corresponds to a regular section $s$ of the shifted tangent bundle of $V$.
\end{rmk}
Now, given an algebraic (or rational) $D$-variety $(V,s)$ over a differential field $K$, we define $(V,s)^{\partial}$ to be $\left\{a\in V(\mathcal{U})\; : \partial(a)=s(a)\right\}$. $(V,s)^{\partial}$ is then a finite dimensional differential algebraic variety of order $\mathrm{dim}(V)$. By a {\em generic point} $v$ of $(V,s)^{\partial}$ we mean a generic point of $v\in V(\mathcal{U})$ in the algebro-geometric sense,i.e., $\mathrm{dim}(V)=tr.deg(K(v)/K)$, such that $\partial(v)=s(v)$. 
From \cite{Buium} we have that $(V,s)^{\partial}$ is Zariski dense in $V(\mathcal{U})$.   

As a remark for who knows model theory, from \cite{Pillay3}: if $K$ is a differentially closed field (or even an algebraically closed differential field), any definable subset of $Y\subseteq K^n$ of finite Morley rank and Morley degree 1, is, up to definable bijection, of the form $(V,s)^{\partial}$ for some irreducible algebraic D-variety $(V, s)$.

\begin{rmk}\label{linear}
 Any algebraic differential equation can birationally be expressed as a $D$-variety. An easy example is the following: if the differential equation is of the form $y^{(n)}=f(t,y,y',\dots,y^{(n-1)})$, with $f$ polynomial (or rational) with coefficient from $\mathbb{C}$, we consider the change of variables $y_i=y^{i}$, for $i\leq n$. Then $\mathbb{A}_{\mathbb{C}(t)}^n$ together with the $s:(y_0,y_1,\dots,y_n)\mapsto(y_1,y_2,\dots,f(t,y_0,\dots,y_{n-1}))$ is our $D$-variety. 
In general an algebraic differential equation is of the form $F(t,y,y',\dots,y^{(n)})=0$, with $F$ algebraic; it is then sufficient to differentiate the equation a finite number of times and, after the change of variables $y_{i}=y^{(i)}$, we obtain a system of equations of order $1$, and it is easy now to define the corresponding $D$-variety.

\end{rmk}

\begin{exmp}\label{pp1}
Let $K=\mathbb{C}(t)$. The first Painlev\'e equation is given by $y''=6y^2+t$. It is naturally defined over $\mathbb{C}(t)$ and if we set $y_1:=y$ and $y_2:= y'$ and let $s(y_1,y_2)=(y_2,6y_1^2+t)$, we can rewrite the equation as $\partial(y_1,y_2)=(y_2,6y_1^2+t)=s(y_1,y_2)$. Hence for this equation we consider the $D$-variety $(\mathbb{A}_{\mathbb{C}(t)}^2,s)$ (over $\mathbb{C}(t)$) and  the solution set is thus: 
\[(\mathbb{A}_{\mathbb{C}(t)}^2,s)^{\partial}=\{(y_1,y_2)\in\mathbb{A}_{\mathcal{U}}^2:\partial(y_1,y_2)=s(y_1,y_2)\}.\]
\end{exmp}

A less trivial example of a $D$-variety is the following:
\begin{exmp}\label{exell}
To write the elliptic equation $(y')^2=4y^3+g_2y+g_3$, with $g_2,g_3\in\mathbb{C}$, as a $D$-variety, we firstly derive both sides obtaining the equation $y''=6y^2+\frac{g_2}{2}$, then we name $y_0:=y$, $y_1:=y'$ and $y_2:=y''$. Thus we get the $D$-variety $(V,s)$, with $V\ssetq \mathbb{A}^3$ the algebraic variety defined by $y_1^2=4y_0^3+g_2y_0+g_3$ together with  $s(y_0,y_1,y_2)=\left(y_1,y_2,6y_0^2+\frac{g_2}{2}\right)$  
\end{exmp}

We remind that we work birationally and so we need the following definition:  
$(V,s)^{\partial}$ is \emph{generically internal} (resp. \emph{generically almost internal}) to $\mathcal{C}$ if $(V\setminus V',s)^{\partial}$ is internal (resp. almost internal) to $\mathcal{C}$, where $(V',s\restr V')$ is some proper $D$-subvariety of $(V,s)$.

The following is well-known (cf. \cite{BigPillay}):
\begin{fct}\label{fact1}Suppose that $K$ is a differential field and that $(V,s)$ is an irreducible algebraic $D$-variety of dimension $n$ (over $K$). Then:
\begin{enumerate}
\item $(V,s)^{\partial}$ is generically internal to $\mathcal{C}$ (with $m=n$ in the definition) over some $L>K$ if and only if for a generic point $y$ of $(V,s)^{\partial}$ over $L$, $y$ is contained in the differential field generated by $L$ and $\mathcal{C}$, i.e. $y\in dcl(L,C)$.
\item $(V,s)^{\partial}$ is generically almost internal to $\mathcal{C}$ (with $m=n$) over some $L>K$ if and only if for a generic point $y$ of $(V,s)^{\partial}$ over $L$, $y$ is contained in the algebraic closure of the differential field generated by $L$ and $\mathcal{C}$, i.e. $y\in acl(L,C)$.
\end{enumerate}
\end{fct}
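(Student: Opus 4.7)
The plan is to prove each direction of (1) and (2) in parallel, since they differ only by replacing $dcl$ with $acl$ and ``definable surjection'' with ``finite-to-one definable relation''.

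For the forward direction, I would start from the witness: an $L$-definable surjection $f\colon\mathcal{C}^n\to(V\setminus V',s)^\partial$ in case (1), or an $L$-definable finite-to-one relation $R\subseteq\mathcal{C}^n\times(V\setminus V',s)^\partial$ in case (2). Let $y$ be a generic point of $(V,s)^\partial$ over $L$; since $tr.deg(L\langle y\rangle/L)=n=\dim V$, the point $y$ cannot lie on the proper subvariety $V'$, so $y\in(V\setminus V',s)^\partial$. Pulling $y$ back through $f$ (resp.\ $R$) yields $c\in\mathcal{C}^n$ with $y=f(c)$ (resp.\ $R(c,y)$ with $R(c,\cdot)$ finite), so $y\in dcl(L,c)$ (resp.\ $y\in acl(L,c)$), giving the desired conclusion. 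This direction is essentially immediate from the definitions.

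For the converse of (1), I begin with a generic $y$ and a witness $y=h(c)$ where $h$ is $L$-definable and $c=(c_1,\ldots,c_k)\in\mathcal{C}^k$. Since $\partial c_i=0$, one has $dcl(L,c)=L(c)$ as fields, hence $tr.deg(L(c)/L)\ge tr.deg(L(y)/L)=n$. The key technical step is to reduce the parameter count from $k$ to exactly $n$: I would pick a transcendence basis $\bar c=(c_1,\ldots,c_n)$ of $L(c)$ over $L$ and enlarge $L$ to a differential field $L'$ absorbing the finitely many parameters needed to express the remaining $c_j$ as $L'(\bar c)$-definable quantities. Composing with $h$ yields an $L'$-definable partial map $f\colon\mathcal{C}^n\to V$ with $f(\bar c)=y$. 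To upgrade $f$ to a surjection onto the complement of a proper $D$-subvariety, I invoke stationarity of the generic type of $(V,s)^\partial$ over $L'$, which holds by $\omega$-stability and irreducibility: any second generic $y'=\sigma(y)$ for an $L'$-automorphism $\sigma$ satisfies $y'=f(\sigma(\bar c))$ with $\sigma(\bar c)\in\mathcal{C}^n$, since $\sigma$ preserves $\mathcal{C}$. Hence $\mathrm{Im}(f)$ contains every generic of $(V,s)^\partial$, and its complement sits inside the $\partial$-locus of a proper $D$-subvariety $V'\subsetneq V$.

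The converse of (2) follows the same outline, starting from a finite-valued $L$-definable relation witnessing $y\in acl(L,\mathcal{C})$ and producing, after the same parameter-reduction and stationarity step, a finite-to-one $L'$-definable relation between $\mathcal{C}^n$ and $(V\setminus V',s)^\partial$. The principal obstacle I expect is the parameter-count reduction from $k$ to $n$: the enlargement from $L$ to $L'$ must pin down the remaining $c_{n+1},\ldots,c_k$ without destroying the algebraic independence of $\bar c$, so the extra parameters must be chosen outside the transcendence basis. A secondary subtlety, resolved by Kolchin closure and the correspondence between generic types and irreducible $D$-subvarieties, is justifying the passage from ``avoids every generic of $(V,s)^\partial$'' to ``contained in a proper $D$-subvariety'' rather than merely in some Zariski closed set.
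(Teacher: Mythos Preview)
The paper does not prove this statement: it is stated as a well-known fact with a reference to \cite{BigPillay} (Pillay, \emph{Geometric Stability Theory}) and no argument is given. So there is no in-paper proof to compare your proposal against.

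Assessing your sketch on its own: the forward direction is correct, and in the converse the stationarity argument (propagating the witness from one generic to all generics, then passing to a proper $D$-subvariety via Kolchin closure) is the right skeleton. The weak spot is exactly the step you flag. You write ``pick a transcendence basis $\bar c=(c_1,\ldots,c_n)$ of $L(c)$ over $L$'', but you only know $tr.deg(L(c)/L)\ge n$, so a transcendence basis may have strictly more than $n$ elements; as written the sentence is not justified. A correct version is to extend a transcendence basis of $L(y)/L$ to one of $L(c)/L$ by adjoining some of the $c_i$'s, and put \emph{those} extra $c_i$'s into $L'$: this simultaneously forces $tr.deg(L'(c)/L')=n$ and keeps $y$ generic over $L'$. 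Even then, for part~(1) a second issue appears: after this reduction one has $y\in L'(c)$ with $tr.deg(L'(c)/L')=n$, but picking an $n$-element transcendence basis $c_1,\ldots,c_n$ among the remaining $c_i$'s only yields $y\in L'(c_1,\ldots,c_n)^{alg}$, i.e.\ $acl$ rather than $dcl$. This is harmless for~(2) but leaves~(1) incomplete; to get $m=n$ with $dcl$ one typically invokes the stronger fact (from internality theory over the strongly minimal set $\mathcal{C}$) that the generic type is, over suitable parameters, in definable \emph{bijection} with an $n$-dimensional constructible set in $\mathcal{C}^n$, rather than trying to whittle the original tuple $c$ down by hand. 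Compare the argument the paper gives in Remark~\ref{rem1}(2), which produces $n$ constants in $dcl(L,y)$ via codes of finite sets.
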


\begin{rmk}\label{rem1}Let $(V,s)$ be an irreducible $D$-variety ($dim(V)=n$) defined over $K=\mathbb{C}(t)$ and suppose that $(V,s)^{\partial}$ is generically almost internal to $\mathcal{C}$ over some $L>K$, then: 
\begin{enumerate}
\item One can witness almost internality over $L=K(\bar{a})$ where $\bar{a}$ is a tuple of elements of $(V,s)^{\partial}(K^{diff})$ and $K^{diff}$ denote the differential closure of $K$.
\item There are $L$-definable functions $f_1,\ldots,f_n:(V,s)^{\partial} \rightarrow \mathcal{C}$ such that for any generic point $y\in(V,s)^{\partial}$, $f_1(y),\ldots,f_n(y)$ are algebraically independent over $L$.
\end{enumerate}
\end{rmk}
\begin{proof}We give a sketch of the proof for the model theorist:\\
(1)  We proceed as in \cite{Poizat}. We may assume that $L$ is a model, that is $L$ is differentially closed. So for any generic $y\in(V,s)^{\partial}$  we have that
\[y\in acl(\overline{c},L)\text{  and  } y\indep[K]L, \]
where $\overline{c}$ is a tuple from $\mathcal{C}$. Using the definability of types we can replace $L$ by $K^{diff}$, that is we have that $y\in acl(K^{diff},\overline{c})$. \\
Let $N$ be the differential field generated by $K$, $\mathcal{C}\cap K^{diff}$ and elements of $(V,s)^{\partial}(K^{diff})$. Then it not hard to see that $tp(y\overline{c}/K^{diff})$ is determined by its restriction to $N$  (c.f. \cite{Poizat} p. 261), so $tp(y/N\cup\{\bar{c}\})$ has a unique extension over $K^{diff}\cup\{\overline{c}\}$. Hence as $tp(y/K^{diff}\cup\{\overline{c}\})$ is algebraic, $y\in acl(N,\overline{c})$, i.e for some tuples $\overline{c}'$ in $\mathcal{C}\cap K^{diff}$ and $\overline{a}$ in $(V,s)^{\partial}(K^{diff})$, $y\in acl(K,\overline{c},\overline{c}',\overline{a})$.\\ 
(2) From the proof of part (1) we have that for generic $y\in(V,s)^{\partial}$ over $L=K(\overline{a})$, $y\in acl(L,\overline{c})$, where $\overline{c}\in\mathcal{C}$ and $\overline{a}$ is some tuple in $(V,s)^{\partial}(K^{diff})$. Using the Tarski-Vaught theorem, we can replace $\overline{c}$ by constants $\overline{d}$ in $L(y)^{diff}$ and since $\mathcal{C}\cap L(y)^{diff}$ is contained in $L(y)^{alg}$, we have that $\overline{d}\in L(y)^{alg}$.\\
Let $\{\overline{d}_1=\overline{d},\ldots,\overline{d}_r\}$ be the set of conjugates of $\overline{d}$ over $L(y)$ and let $(d_1,\ldots,d_s)$ be the code of $\{\overline{d}_1,\ldots,\overline{d}_r\}$ which exist in $L(y)^{diff}$ by elimination of imaginaries. Then we have that $y\in acl(d_1,\ldots,d_s,L)$ and each $d_i\in dcl(y,L)$. So $y$ and $(d_1,\ldots,d_s)$ are interalgebraic over $L$. Finally, as $tr.deg(\mathbb{Q}(L,y)/\mathbb{Q}(L))=n$ we can choose, $d_1,\ldots,d_n$ (maximal) algebraic independent over $L$ so that $d_i\in dcl(y,L)$ and $y\in acl(d_1,\ldots,d_n,L)$.\\ 
\end{proof}

\section{First integrals and Integrability}

We now describe a traditional notion of integrability: having enough first integrals. The first integrals are functions from the phase space of a differential equation to the constants, analytic on some nonempty open set; where, for an autonomous differential equation in $y(t)$ of order $n$, the phase space is the $n+1$ dimensional space with coordinates $(t,y,y',\dots,y^{(n)})$. With a change of coordinate as in Remark \ref{linear} we obtain a system of equations of order $1$, the phase space has then coordinates $(t,y_0,\dots,y_{n-1})$.
 
In our context from a $D$-variety we will recover a fibred space (that in the case of an autonomous equation will correspond to the usual phase space) $\mathcal{V}\rightarrow \mathbb{A}^1$ together with a section $s_{\mathcal{V}}$, where it is possible to define a first integral. This duality of a differential equation as a $D$-variety and as a fibred space justifies the geometric approach used in this article.\\

Let $(V, s_V)$ be an irreducible $D$-variety defined over $\mathbb{C}(t)$ ($t\in\mathcal{U}$ such that $\partial t=1$) of dimension $n$. 
If locally, we consider the polynomials generating $I(V)\subseteq\mathbb{C}(t)[\overline{x}]$, the ideal of $V$, as polynomials in $\mathbb{C}$ (that is we allow $t$ to vary), we obtain a complex algebraic variety $\mV$ of dimension $n+1$. The condition $\partial(t)=1$ induces a complex algebraic variety $S\ssetq \mathbb{A}_{\mathbb{C}}^1$ together with the constant section $1_S:S\rightarrow TS$, $t\mapsto (t,1)$; naturally we get a projection $\mathcal{V}\rightarrow S$. The section $s_V$ induces a section $s_{\mV}:\mV\rightarrow T\mV$ that agrees with $1_S$ and is therefore defined by $s_{\mV}=(1,s_V)$. We can thus represent an algebraic differential equation as a fibration  $p:(\mathcal{V},s_{\mathcal{V}})\rightarrow (S,1)$ of which $(V, s_V)$ is the generic fibre of $p$\\
\indent The equality $\partial(\bar{v})=s_V(\bar{v})$ translates to the differential equation $\frac{d\bar{y}}{dt}=s_{\mV}(t,\bar{y})$ on local analytic sections $\bar{y}$ of $p$. So a solution in our context is a local analytic section $\bar{g}:D\rightarrow\mV$ of $p$ on some domain $D$ in $S$ and such that $d\bar{g}(\frac{d}{dt})=s_{\mV}$. Furthermore, $\bar{g}$ defines an algebraic integral curve $C_{\bar{g}}$ of $s_{\mV}$, which is just the Zariski closure in $\mV$ of the graph of $\bar{y}=\bar{g}(t)$.
Of course $(\mathcal{V},s)$ can also be seen as a $D$-variety  defined over $\mathbb{C}$; however we will use this fact only in the next section. \\

In the autonomous case this construction can be simplified: for $(V,s_V)$ an irreducible algebraic (or rational) $D$-variety defined over $\mathbb{C}$, we look at the trivial fibration $p:\mathcal{V}=\mathbb{A}^1_{\mathbb{C}}\times V\rightarrow\mathbb{A}^1_{\mathbb{C}}$, so that $\mathcal{V}$ is again an irreducible complex algebraic variety and in this case we have $(\mathcal{V},s_{\mV})=(\mathbb{A}^1\times V,1\times s_V)$, the phase space.\\

Let then $(\mathcal{V},s_{\mV})$ be a complex algebraic variety of dimension $n+1$ with a distinguished section $s_{\mathcal{V}}$ to its tangent bundle; a \textit{rational first integral} of $(\mathcal{V},s_{\mV})$ on some Zariski open $U\subset\mV$ is a rational function $h:\mV\rightarrow \mathbb{C}$ such that $dh(s_{\mV}(v))=0$ for all $v\in U$. For $c\in Im(h)$, we will denote by $H_c$ the closed subvariety of $\mathcal{V}$ of dimension $n$ defined by $h^{-1}(c)$. Given $(\mV,s_{\mV})\rightarrow (S,1)$, let $\bar{g}:D\rightarrow\mV$ be a local solution. Then, if $h:\mV\rightarrow \mathbb{C}$ is a rational first integral, we see that $h\circ\bar{g}:D\rightarrow\mathbb{C}$ is constant as $0=dh(s_{\mV})=\frac{d(h\circ\bar{g})}{dt}$. 
 


Suppose that $h_1,\dots,h_n$, $h_i:U_i\rightarrow \mathbb{C}$, are $n$ rational first integrals of $(\mathcal{V},s_{\mV})$. 
We say that $h_1,\dots,h_n$ are \textit{independent} if there is no (nontrivial) function $\theta\in \mathbb{C}[x_1,\ldots,x_n]$ such that for some generic point $v\in \mV$, $\theta(h_1(v),\dots,h_n(v))=0$.

\begin{lem}\label{indepen}
Let $(\mathcal{V},s_{\mV})$ be as above and suppose that $h_1,\dots,h_n$ are $n$ independent rational first integrals of $(\mV,s_{\mV})$. Suppose $y\in\mV$ is generic and let $c_1,\ldots,c_n\in\mathbb{C}$ be such that $h_i(y)=c_i$ for all $i$. Then
\begin{enumerate}
\item For $i\neq j$, $\dim(H_{i,c_i}\cap H_{j,c_j})< n$, where the $H_{i,c_i}$ are the subvarieties defined by $h_i^{-1}(c_i)$ for all $i$.
\item $\dim(\cap_{i<n}H_{i,c_i})=1$.
\end{enumerate}
\end{lem}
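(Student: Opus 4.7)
The key observation is that the independence hypothesis on $h_1,\dots,h_n$ is precisely the algebro-geometric statement that the rational map
\[\Phi=(h_1,\dots,h_n):\mathcal{V}\dashrightarrow \mathbb{A}^n_{\mathbb{C}}\]
is dominant. Indeed, if the image of $\Phi$ were contained in a proper closed subvariety $Z\subsetneq \mathbb{A}^n$, then any nonzero $\theta\in\mathbb{C}[x_1,\dots,x_n]$ vanishing on $Z$ would yield the forbidden identity $\theta(h_1,\dots,h_n)\equiv 0$ on a generic point of $\mathcal{V}$. The whole lemma will follow from this dominance together with the fiber dimension theorem.

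For part (1), I would first note that any pair $(h_i,h_j)$ is again independent in the analogous sense: a nontrivial polynomial relation in the two variables $x_i,x_j$ is in particular a nontrivial polynomial relation in $x_1,\dots,x_n$. Therefore the rational map $\Phi_{ij}=(h_i,h_j):\mathcal{V}\dashrightarrow \mathbb{A}^2$ is dominant. By the fiber dimension theorem, there is a dense open $W_{ij}\subseteq \mathbb{A}^2$ over which the fibres of $\Phi_{ij}$ have pure dimension $\dim\mathcal{V}-2 = n-1$. Since $y$ is generic in $\mathcal{V}$, we may arrange that $(c_i,c_j)=\Phi_{ij}(y)$ lies in $W_{ij}$, so
\[H_{i,c_i}\cap H_{j,c_j}=\Phi_{ij}^{-1}(c_i,c_j)\]
has dimension $n-1<n$, as required.

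For part (2), the same argument applies to the full map $\Phi$. Because $\Phi$ is dominant and $\dim\mathcal{V}=n+1$, the fiber dimension theorem produces a dense open $W\subseteq \mathbb{A}^n$ over which every fibre has pure dimension $(n+1)-n=1$. Taking $y$ generic enough so that $(c_1,\dots,c_n)=\Phi(y)\in W$, the intersection $\bigcap_{i} H_{i,c_i}=\Phi^{-1}(c_1,\dots,c_n)$ has dimension $1$.

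The only real subtlety is that $\Phi$ is a rational, not regular, map, so one must restrict to the Zariski open subset $U\subseteq\mathcal{V}$ on which $\Phi$ is defined before invoking the fiber dimension theorem; but since "$y$ generic" allows us to insist $y\in U$ and that $\Phi(y)$ lands in the open locus of good fibre dimension, this is a routine book-keeping matter rather than a genuine obstacle. The conceptual content of the proof is entirely the translation of \emph{independence of first integrals} into \emph{dominance of $\Phi$}.
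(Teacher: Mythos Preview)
Your argument is correct. The paper's proof rests on the same underlying idea---that independence of the $h_i$ is exactly algebraic independence of the values $c_i=h_i(y)$ over the base field---but carries it out by a direct transcendence-degree computation rather than by invoking the fibre dimension theorem. Concretely, for part~(1) the paper fixes a field of definition $K\subseteq\mathbb{C}$, notes that $\mathrm{tr.deg}(K(y)/K)=n+1$ and $\mathrm{tr.deg}(K(y)/K(c_i))=n$, and argues by contradiction: if $\mathrm{tr.deg}(K(y)/K(c_i,c_j))=n$ then $c_j\in K(c_i)^{alg}$, producing a forbidden polynomial relation $\theta(h_i(y),h_j(y))=0$. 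For part~(2) the paper obtains the upper bound $\dim\leq 1$ by iterating this argument, and obtains the lower bound $\dim\geq 1$ separately by observing that the integral curve of $s_{\mathcal{V}}$ through $y$ lies inside $\bigcap_i H_{i,c_i}$. Your route packages both bounds into a single appeal to the fibre dimension theorem for the dominant map $\Phi$, which is cleaner and avoids the need for the integral-curve observation; the paper's version, by contrast, is more self-contained and makes the role of the first-integral condition (constancy along integral curves) visible in the lower bound.
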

\begin{proof}(1) Let $\overline{d}$ be the finite tuple of complex parameters such that $K=\mathbb{Q}(\overline{d})$ is the smallest field of definition of $(\mV,s_{\mV})$. We assume for simplicity that all the $h_i$'s are also defined over $K$. As $y$ is a generic point of $(\mV,s_{\mV})$, $tr.deg(K(y)/K)=n+1$ and as $y\in H_{i,c_i}$, $tr.deg(K(y)/K(c_i))=n$ for all $i$ ($c_i\not\in K^{alg}$).\\
Suppose for contradiction that for $i\neq j$, $\dim(H_{i,c_i}\cap H_{j,c_j})=n$, that is, suppose that $tr.deg(K(y)/K(c_i,c_j))=n$. Then, without loss of generality, we have that $c_j\in K(c_i)^{alg}$. Hence there is $\theta(c_i,x)$ in $K(c_i)[x]$ such that $\theta(c_i,c_j)=0$, i.e., $\theta(h_i(y),h_j(y))=0$ and this contradicts independence of the first integrals.\\
(2) We only need to observe that $\dim(\cap_{i<n}H_{i,c_i})\geq 1$ as $\cap_{i<n}H_{i,c_i}$ always intersect the integral curve through $y$ in infinitely many points. Using induction together with a similar argument as in the proof of part (1) one has that $\dim(\cap_{i<n}H_{i,c_i})\geq 1$ and so we are done.

\end{proof}

Integrability is then having enough independent first integrals:

\begin{defn}
 We say that $(\mV,s_{\mV})\rightarrow (S,1)$  is \tit{algebraically integrable} if it has $n$ independent rational first integrals.
\end{defn}
 
In \cite{Chat}, algebraic integrability was introduced in a slightly different way, it is however not hard to see that the following is true.
\begin{fct}
$(\mV,s_{\mV})\rightarrow (S,1)$ is algebraically integrable if and only if through every point in a Zariski open subset of $\mV$ there passes an algebraic integral curve of $s_{\mV}$.
\end{fct}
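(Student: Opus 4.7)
The plan is to establish both directions directly, using Lemma \ref{indepen} for $(\Rightarrow)$ and a leaf-space construction for $(\Leftarrow)$.

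$(\Rightarrow)$ Assume we have $n$ independent rational first integrals $h_1,\dots,h_n$, regular on some Zariski open $U\ssetq\mV$. For a generic $y\in U$, set $c_i=h_i(y)$. By Lemma \ref{indepen}(2), the intersection $W_y:=\bigcap_i H_{i,c_i}$ has dimension $1$. Any local analytic solution $\bar g:D\to\mV$ with $\bar g(t_0)=y$ satisfies $h_i\circ\bar g\equiv c_i$ (since $h_i$ is a first integral), so the graph of $\bar g$, and hence its Zariski closure $C_{\bar g}$, is contained in $W_y$. As $C_{\bar g}$ has complex-analytic dimension at least $1$, it must be a $1$-dimensional irreducible component of $W_y$, therefore an algebraic integral curve. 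Letting $y$ range over a suitable Zariski open subset of $\mV$ gives the conclusion.

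$(\Leftarrow)$ Suppose an algebraic integral curve $C_y$ passes through every $y$ in a Zariski open $U\ssetq\mV$. The key step is to realise the assignment $y\mapsto [C_y]$ as a rational map $\pi:\mV\dashrightarrow B$ to some algebraic variety $B$ of dimension $n$: since $s_{\mV}$ determines the tangent direction at every smooth point, $C_y$ is the unique algebraic integral curve containing $y$, so this assignment is well-defined, and one can take $B$ to be the Zariski closure of its image in the Chow variety of curves in $\mV$. Equivalently, one may apply a Rosenlicht-type rational quotient to the algebraic equivalence relation ``lying on the same $C_y$'' on $U$. As the generic fibres of $\pi$ are the $1$-dimensional curves $C_y$ and $\dim\mV=n+1$, we get $\dim B=n$. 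Picking $n$ algebraically independent rational functions $g_1,\dots,g_n\in\mathbb{C}(B)$ and setting $h_i:=g_i\circ\pi$, each $h_i$ is constant along every $C_y$, so $dh_i(s_{\mV})=0$ generically, and the $h_i$ are independent because the $g_i$ are algebraically independent on $B$.

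The main obstacle is the reverse direction: making precise the algebraic leaf space $B$ and the rational map $\pi$. The substance is to argue that the $C_y$ form a \emph{single algebraic family} (of bounded degree in some projective embedding of $\mV$), so that Chow/Hilbert-scheme machinery applies. This in turn follows from the uniqueness of $C_y$ given $y$ and $s_{\mV}$: the set-valued assignment $y\mapsto C_y$ is a rational map into a Chow variety, its image is constructible, and its Zariski closure supplies the desired $B$. Once $B$ is in hand, the construction of the first integrals is immediate by pullback of coordinates.
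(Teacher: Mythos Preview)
The paper does not actually prove this Fact: it is stated with the remark that ``it is however not hard to see'' and that the right-hand side is simply the definition of algebraic integrability used in \cite{Chat}. So there is no argument in the paper to compare against, and your proposal is supplying a proof where the paper gives none.

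Your forward direction is fine and is exactly what Lemma~\ref{indepen} is there for: the level sets of the $h_i$ cut out a $1$-dimensional locus through a generic point, and the local analytic solution lives inside it, so its Zariski closure is an algebraic integral curve.

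In the reverse direction your strategy (pass to a leaf space $B$ via Chow/Hilbert or a Rosenlicht quotient, then pull back a transcendence basis of $\mathbb{C}(B)$) is the standard one and is correct once $B$ exists. The only genuine gap is your justification for why the curves $C_y$ form a bounded family. You write that bounded degree ``follows from the uniqueness of $C_y$ given $y$ and $s_{\mV}$'', and then immediately speak of $y\mapsto C_y$ as a rational map into a Chow variety. This is circular: uniqueness gives you a well-defined \emph{set-theoretic} map, but to land in a fixed Chow variety you already need a uniform bound on the degree, which uniqueness alone does not provide. The missing step is a constructibility/compactness argument: for each $d$, the set
\[
U_d=\{y\in U:\text{there is an integral curve of }s_{\mV}\text{ of degree}\le d\text{ through }y\}
\]
is constructible (it is the projection of an algebraic incidence variety inside $U\times\mathrm{Chow}_d$), and by hypothesis $U=\bigcup_d U_d$. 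Since $U$ is irreducible, some $U_{d_0}$ contains a nonempty Zariski open subset of $U$, and on that open set the curves have bounded degree. After that, your Rosenlicht-quotient or Chow-variety construction goes through and produces the desired $h_1,\dots,h_n$.
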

The statement on the right hand side is precisely the definition given in \cite{Chat}.


\section{Internality as an integrability notion}

In \cite{Pillay}, Pillay claims the following result,
\begin{claim}
Let $(V,s)$ be a rational $D$-variety defined over $\mathbb{C}(t)^{alg}$. Let $p:(\mathcal{V},s_{\mV})\rightarrow (S,1)$ be the corresponding fibration (as in Section 3) of complex varieties, $S\subseteq\mathbb{A}^1_{\mathbb{C}}$. Then $(V,s)^{\partial}$ is generically internal to $\mathcal{C}$ if and only if there are complex varieties with vector fields $(S',s_0)$ and $(F,0)$, and a dominant map $(S',s_0)\rightarrow(S,1)$ over $\mathbb{C}$, such that $(\mathcal{V},s_{\mV})\times_{(S,1)}(S',s_0)$ birationally embeds (over $\mathbb{C}$) into $(F,0)\times(S',s_0)$.
\end{claim}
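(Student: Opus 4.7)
My plan is to prove the biconditional in the two directions. The easier ``$\Leftarrow$'' extracts first integrals from the birational embedding, and the harder ``$\Rightarrow$'' constructs the geometric data from the model-theoretic content of Fact \ref{fact1}(1) and Remark \ref{rem1}.

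For ``$\Leftarrow$'', I fix a generic $t^*\in(S',s_0)^{\partial}$ lying over $t\in S$, which exists because $(S',s_0)\to(S,1)$ is dominant with $s_0\mapsto 1$, and set $L=K\gen{t^*}$, a differential field extending $K=\mathbb{C}(t)^{alg}$. For a generic $v\in(V,s)^{\partial}$ over $L$, the pair $(v,t^*)$ is a generic solution of the fibre-product $D$-variety $(\mathcal{V},s_{\mathcal{V}})\times_{(S,1)}(S',s_0)$, so the embedding $\Phi$ into $(F,0)\times(S',s_0)$ sends $(v,t^*)$ to a solution $(f,t^*)$ with $f\in F(\mathcal{U})$. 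Since the vector field on $F$ is zero, $\partial f=0$, i.e.\ $f\in F(\mathcal{C})$. Birationality of $\Phi$ gives $v\in dcl(f,t^*)\subseteq dcl(L,\mathcal{C})$, and Fact \ref{fact1}(1) then yields generic internality.

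For ``$\Rightarrow$'', using Fact \ref{fact1}(1) and the internality analogue of Remark \ref{rem1} (with $dcl$ in place of $acl$), there exist a tuple $\bar{a}=(a_1,\dots,a_k)$ in $(V,s)^{\partial}(K^{\mathrm{diff}})$ and, over $L=K(\bar{a})$, definable functions $f_1,\dots,f_n\colon(V,s)^{\partial}\to\mathcal{C}$ such that $y\in dcl(L,f_1(y),\dots,f_n(y))$ for generic $y$. I take $S'\subseteq\mathcal{V}^k$ to be the $\mathbb{C}$-Zariski closure of the one-parameter locus $t\mapsto(a_1(t),\dots,a_k(t))$, with $\rho\colon S'\to S$ the projection to the base of any factor. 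The coordinatewise vector field on $\mathcal{V}^k$ restricts to a vector field $s_0$ on $S'$ that projects to $1$ on $S$. Reading $\bar{a}$ as the tautological coordinates on $S'$, the functions $f_i$ become $\mathbb{C}$-rational on $\mathcal{V}\times_S S'$; together with projection to $S'$ they give a rational map $\Phi\colon\mathcal{V}\times_S S'\to\mathbb{A}^n_{\mathbb{C}}\times S'$. Taking $F$ to be the image of the $f_i$-component in $\mathbb{A}^n_{\mathbb{C}}$ (equipped with the zero vector field, correct since each $f_i$ takes values in $\mathcal{C}$), birationality of $\Phi$ follows from $y\in dcl(L,f_1(y),\dots,f_n(y))$.

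The main obstacle is making the geometric construction in ``$\Rightarrow$'' rigorous. Two points need care: that the Zariski closure of $\bar{a}$ over $\mathbb{C}$ is truly a curve, which uses $\bar{a}$ being a tuple of sections of $p$ varying in the one parameter $t$; and that definability of the $f_i$ over $L=K(\bar{a})$ can be upgraded to $\mathbb{C}$-rational definability on $\mathcal{V}\times_S S'$, which hinges on identifying $\bar{a}$ with the tautological section of $S'\to S$ and invoking elimination of imaginaries in $DCF_0$ to convert $K^{\mathrm{diff}}$-parameters into rational data on $S'$. With these in place the vector-field compatibility and birationality checks are routine.
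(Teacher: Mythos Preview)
The paper does not actually prove this claim. Immediately after stating it, the authors write ``But a proof is not given,'' and then prove instead the related Theorem~\ref{mainth} on \emph{almost} internality phrased in terms of first integrals. So there is no paper proof to compare your proposal against. That said, your argument is essentially the internality analogue of the paper's proof of Theorem~\ref{mainth}, recast in the birational-embedding language of the claim: your $S'$ plays the role of the paper's $\mathcal{W}$, and your map $\Phi=(f_1,\dots,f_n,\mathrm{proj}_{S'})$ is the geometric packaging of the paper's $\mathcal{W}$-independent first integrals $h_1,\dots,h_n$ together with the inverse coming from $y\in dcl(L,f_1(y),\dots,f_n(y))$ rather than $acl$.

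There is, however, one genuine error. You assert that the $\mathbb{C}$-Zariski closure of the locus $t\mapsto(a_1(t),\dots,a_k(t))$ is ``truly a curve,'' justified by the fact that the $a_i$ are sections varying in the single parameter $t$. This is false in general: the analytic image is one-dimensional, but its Zariski closure has dimension $\mathrm{tr.deg}(\mathbb{C}(t,\bar a)/\mathbb{C})=1+\mathrm{tr.deg}(\mathbb{C}(t)(\bar a)/\mathbb{C}(t))$, which is typically $>1$ since the $a_i$ lie in $K^{\mathrm{diff}}$ and need not be algebraic over $\mathbb{C}(t)$. (Think of the graph of $e^t$ in $\mathbb{A}^2$: an analytic curve whose Zariski closure is all of $\mathbb{A}^2$.) Fortunately the claim does not require $S'$ to be a curve, only that $(S',s_0)\to(S,1)$ be dominant; your construction already gives this, and once you drop the curve assertion your $S'$ is exactly the paper's $\mathcal{W}$. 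The rest of your argument---reading the $f_i$ as $\mathbb{C}$-rational functions on $\mathcal{V}\times_S S'$ via the tautological coordinates, checking vector-field compatibility, and deducing birationality from $y\in dcl(L,\bar f(y))$---then goes through as in Lemmas~\ref{funct} and~\ref{AlgInd}. You should also note that since $(V,s)$ is defined over $\mathbb{C}(t)^{alg}$ rather than $\mathbb{C}(t)$, the finitely many algebraic parameters must be absorbed into $S'$ as well; this is harmless but should be said.
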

Where $(F,0)$ denotes the complex variety $F$ equipped with the $0$-section of the tangent bundle. But a proof is not given. 

We shall obtain a result generalizing this, although our focus is more on the meaning of internality in the sense of existence of first integrals, and so we provide an explicit description of such first integrals.

In this direction the only result is by Chatzidakis-Hrushovski in \cite{Chat}, and it is limited to the autonomous case:
\begin{prop}
Let $(V,s)$ be a rational $D$-variety defined over $\mathbb{C}$. Let $p:(\mathcal{V},s_{\mV})\rightarrow (S,1)$ be the corresponding fibration, $S\subseteq\mathbb{A}^1_{\mathbb{C}}$. Then $(V,s)^{\partial}$ is generically almost internal to $\mathcal{C}$ over $\mathbb{C}(t)^{alg}$ if and only if $(\mathcal{V},s_{\mV})$ is algebraically integrable.
\end{prop}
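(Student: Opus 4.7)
I would prove the two implications separately, translating between model-theoretic data (Fact~\ref{fact1}, Remark~\ref{rem1}) and geometric data (Lemma~\ref{indepen} together with the integral-curve characterisation of algebraic integrability).

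\textbf{$(\Leftarrow)$} Given independent rational first integrals $h_1,\dots,h_n\in\mathbb{C}(\mathcal{V})$, identify $\mathbb{C}(\mathcal{V})=\mathbb{C}(t)(V)$ and view each $h_i$ as a rational function on $V$ over $\mathbb{C}(t)$. The condition $dh_i(s_{\mathcal{V}})=0$ forces $\partial(h_i(t,y))=0$ for every $y\in(V,s)^{\partial}$ in the domain, so each $h_i$ produces a $\mathbb{C}(t)$-definable map $f_i\colon(V,s)^{\partial}\to\mathcal{C}$. Applying Lemma~\ref{indepen}(2) at a generic $(t,y)$ gives a one-dimensional algebraic curve $C=\bigcap_i H_{i,f_i(y)}\subset\mathcal{V}$ through $(t,y)$; this $C$ cannot be contained in a fibre of $\mathcal{V}\to\mathbb{A}^{1}$, so the projection $C\to\mathbb{A}^1$ is dominant and its fibre over $t$ is finite. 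Hence $y\in\mathrm{acl}(\mathbb{C}(t),f_1(y),\dots,f_n(y))\subseteq\mathrm{acl}(\mathbb{C}(t),\mathcal{C})$, and Fact~\ref{fact1}(2) yields generic almost internality over $\mathbb{C}(t)^{\mathrm{alg}}$.

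\textbf{$(\Rightarrow)$} Apply Remark~\ref{rem1}(2) to extract $\mathbb{C}(t)^{\mathrm{alg}}$-definable maps $f_i\colon(V,s)^{\partial}\to\mathcal{C}$ with $(f_1(y),\dots,f_n(y))$ algebraically independent over $\mathbb{C}(t)^{\mathrm{alg}}$ at a generic $y$. Because $s_V$ is defined over $\mathbb{C}$, every $\partial^{k}y$ is already a rational function of $y$ over $\mathbb{C}$, so $dcl(\mathbb{C}(t)^{\mathrm{alg}},y)=\mathbb{C}(t)^{\mathrm{alg}}(y)$; consequently each $f_i$ arises from a rational function $h_i\in\mathbb{C}(t)^{\mathrm{alg}}(V)$, and the requirement $f_i(y)\in\mathcal{C}$ is precisely the first-integral equation for $h_i$ on $(\mathcal{V},s_{\mathcal{V}})$, only with coefficients in $\mathbb{C}(t)^{\mathrm{alg}}$ rather than $\mathbb{C}$. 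To descend, pick a finite extension $F=\mathbb{C}(\tilde{S})/\mathbb{C}(t)$ containing all coefficients of the $h_i$ and form the finite cover $\pi\colon\tilde{\mathcal{V}}=\tilde{S}\times_{S}\mathcal{V}\to\mathcal{V}$; the $h_i$ now lie in $\mathbb{C}(\tilde{\mathcal{V}})$ and remain first integrals of the pulled-back section $\tilde{s}_{\mathcal{V}}$. The estimate
\begin{equation*}
\mathrm{tr.deg}\bigl(\mathbb{C}(h_1,\dots,h_n)/\mathbb{C}\bigr)\geq \mathrm{tr.deg}\bigl(F(h_1,\dots,h_n)/\mathbb{C}\bigr)-\mathrm{tr.deg}(F/\mathbb{C})=(n+1)-1=n
\end{equation*}
shows the $h_i$ remain independent over $\mathbb{C}$ on $\tilde{\mathcal{V}}$, so $(\tilde{\mathcal{V}},\tilde{s}_{\mathcal{V}})$ is algebraically integrable.

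The main obstacle is the very last step, the descent from $\tilde{\mathcal{V}}$ to $\mathcal{V}$. I would not try to push the $h_i$ down directly (Galois traces or symmetric functions risk destroying independence), but instead invoke the equivalent ``algebraic integral curves through every generic point'' formulation recorded in the Fact immediately after the definition of algebraic integrability. Through a generic $\tilde{v}\in\tilde{\mathcal{V}}$ passes an algebraic integral curve $\tilde{C}$ of $\tilde{s}_{\mathcal{V}}$; since $\pi$ is finite and $d\pi\circ\tilde{s}_{\mathcal{V}}=s_{\mathcal{V}}\circ\pi$, the image $\pi(\tilde{C})$ is a one-dimensional algebraic integral curve of $s_{\mathcal{V}}$ through $\pi(\tilde{v})$. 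Any generic $v\in\mathcal{V}$ lifts to a generic $\tilde{v}$, so such curves exist through a Zariski open of $\mathcal{V}$, and a second application of the Fact delivers algebraic integrability of $(\mathcal{V},s_{\mathcal{V}})$.
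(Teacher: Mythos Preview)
The paper does not actually prove this proposition; it is quoted from Chatzidakis--Hrushovski \cite{Chat} as the autonomous precursor to the main Theorem~\ref{mainth}, and no argument is supplied. So there is no ``paper's own proof'' to compare against directly. Your proof is correct, and it is worth noting how it relates to the machinery the paper does develop.

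Your $(\Leftarrow)$ direction is the specialisation to $\mathcal{W}=S$ of what the paper proves in Lemmas~\ref{funct} and~\ref{AlgInd} together with the first half of the proof of Theorem~\ref{mainth}: first integrals become definable maps into $\mathcal{C}$, independence becomes algebraic independence of their values at a generic point, and a transcendence-degree count (your curve argument via Lemma~\ref{indepen}(2) is a geometric rendering of the same count) forces $y\in\mathrm{acl}(\mathbb{C}(t),\mathcal{C})$.

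Your $(\Rightarrow)$ direction does something the paper never attempts. In Theorem~\ref{mainth} the extra parameters are absorbed into the auxiliary variety $\mathcal{W}$, so the first integrals are only required to live on $\mathcal{W}\times_{S}\mathcal{V}$ and no descent is needed. Here, by contrast, the conclusion demands first integrals on $\mathcal{V}$ itself, so the passage from coefficients in $\mathbb{C}(t)^{\mathrm{alg}}$ down to $\mathbb{C}(t)$ is a genuine additional step, and it is exactly the content that the paper outsources to \cite{Chat}. Your solution---pull back to a finite cover $\tilde{\mathcal{V}}$, keep independence there by the transcendence-degree inequality, and then push algebraic integral curves forward along the finite map $\pi$ using the equivalent characterisation recorded in the Fact after Definition~3.2---is clean and correctly sidesteps the danger you flag, that symmetrising the $h_i$ over $\mathrm{Gal}(\mathbb{C}(\tilde{S})/\mathbb{C}(t))$ might destroy their independence.
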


We generalize this result to the non-autonomous case (i.e. for irreducible $D$-varieties defined over $\mathbb{C}(t)$). Moreover almost internality over $\mathbb{C}(t)^{alg}$ is quite restrictive; we want to allow the possibility of witnessing almost internality using any parameters from the universal domain $\mathcal{U}$. From Remark \ref{rem1}, this will occur in $L=\mathbb{C}(t,\bar{a})$ for some tuple $\bar{a}$ of $ (V,s)^{\partial}(\mathbb{C}(t)^{diff})$. Let $W$ be the (algebraic) locus of $\bar{a}$ over $\mathbb{C}(t)^{alg}$ and let $s_W$ be the rational function such that $\partial(\bar{a})=s_W(\bar{a})$. $(W,s_{W})$ is then a $D$-subvariety of $(V,s)^m$ for some $m\in\mathbb{N}$, defined over $\mathbb{C}(t)^{alg}$.

As before we let $(W,s_{W})$ be the generic fibre of some fibration $p_2:(\mW,s_{\mW})\rightarrow(S_2,1)$ of irreducible complex varieties. Finally, the ``correct" variety to work with is the fibered product over $S=S_1\cap S_2\subseteq\mathbb{A}^1_\mathbb{C}$, $(\WSV,\pi_1,\pi_2)$, with the 
commutative diagram:
\begin{equation*}
\xymatrix{&\WSV \ar[ld]^{\pi_2}\ar[rd]_{\pi_1}\ar[dd]^{\pi} &\\ \mW \ar[dr]^{p_2}&&
  \mV \ar[dl]_{p_1}\\& S&}
\end{equation*}

We hence obtain a complex algebraic variety (with a vector field) $(\WSV,s_{\mW\times\mV})$, where $s_{\mW\times\mV}=(1,s_W,s_V)$.

Our previous definition of independence will only be informative about $(\WSV,s_{\mW\times\mV})$, but not about $(\mV,s_{\mV})$, the variety we are interested in. We need thus to strengthen the definition of independence of first integrals, to impose that they are independent also when we restrict our attention to $(\mV,s_{\mV})$:

\begin{defn}
Let $h_1,\dots,h_n$ be rational first integrals of $(\WSV,s_{\mW\times\mV})$. We say that $h_1,\dots,h_n$ are {\em $\mW$-independent} if there is no function\\ $\theta\in\mathbb{C}(t,w)[x_1,\ldots,x_n]$ such that for some generic point $(t,w,v)$ of $\WSV$, $\theta(h_1(t,w,v),\dots,h_n(t,w,v))=0$.  
\end{defn}

An analogue of Lemma \ref{indepen} in this case is as follows:
\begin{lem}\label{indstr}
Let $(\WSV,s_{\mVW})$ be as above and suppose that $h_1,\dots,h_n$ are $n$ $\mW$-independent rational first integrals of $(\WSV,s_{\mVW})$. Suppose $(t,w,v)\in\mVW$ is generic and let $c_1,\ldots,c_n\in\mathbb{C}$ be such that $h_i(t,w,v)=c_i$ for all $i$. Then
\begin{enumerate}
\item For $i\neq j$, $\mathrm{dim}((H_{i,c_i}\cap H_{j,c_j})_{{\restriction}{\mV}})< n$, where the $H_{i,c_i}$ are the subvarieties defined by $h_i^{-1}(c_i)$.
\item $\dim((\cap_{i<n}H_{i,c_i})_{{\restriction}{\mV}})=1$.
\end{enumerate}
\end{lem}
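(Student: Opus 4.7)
The natural plan is to reduce Lemma \ref{indstr} to Lemma \ref{indepen} by a base change: view the generic fibre of $\pi_2: \WSV \to \mW$ as $\mV$ with scalars extended from $\mathbb{C}$ to the function field $\mathbb{C}(\mW) = \mathbb{C}(t,w)$. Under this identification, the $\mW$-independence hypothesis is exactly the statement that $h_1, \ldots, h_n$ are algebraically independent, in the sense of the previous section, as rational functions on the base-changed variety $\mV_{\mathbb{C}(\mW)}$, which has absolute dimension $n+1$. The restrictions $(H_{i,c_i})_{\restriction \mV}$ are then naturally interpreted as the level hypersurfaces $h_i = c_i$ on $\mV_{\mathbb{C}(\mW)}$.

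Concretely, I would let $\bar d$ be a finite tuple of complex parameters over which $(\mV, s_{\mV})$, $(\mW, s_{\mW})$ and each $h_i$ is defined, set $K = \mathbb{Q}(\bar d)$, and fix a generic $y = (t,w,v) \in \WSV$. Then $tr.deg(K(y)/K) = n + m + 1$ (where $m+1 = \dim \mW$), and by $\mW$-independence the values $c_1 = h_1(y), \ldots, c_n = h_n(y)$ are algebraically independent over $K(t,w)$. For part (1), if $(H_{i,c_i} \cap H_{j,c_j})_{\restriction \mV}$ had dimension $n$, a transcendence-degree count would force $c_j \in K(t,w,c_i)^{alg}$, yielding a nontrivial polynomial $\theta \in \mathbb{C}(t,w)[x_1,x_2]$ with $\theta(c_i, c_j) = 0$, contradicting $\mW$-independence. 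For part (2), the upper bound $\dim \leq 1$ follows by the same inductive argument (generalizing the two-integral case), and the lower bound $\dim \geq 1$ is automatic from the general fact that the intersection of $n$ hypersurfaces in an $(n+1)$-dimensional variety has dimension at least $1$.

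The main obstacle I anticipate is essentially bookkeeping rather than conceptual: pinning down the precise meaning of $(\,\cdot\,)_{\restriction \mV}$ as a generic fibre / base change construction, so that the transcendence-degree computations over $K(t,w)$ translate into the stated dimension equalities on $\mV_{\mathbb{C}(\mW)}$. One also has to be careful that the arguments in Lemma \ref{indepen} used only the algebraic independence of the $c_i$ and the dimension of the ambient variety, and not the fact that the $h_i$ are actual first integrals; inspection confirms this, so the proof is in the end a verbatim translation of Lemma \ref{indepen} from $\mathbb{C}$ to $\mathbb{C}(\mW)$.
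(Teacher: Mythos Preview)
Your proposal is correct and is precisely the route the paper intends: the paper's own proof of Lemma~\ref{indstr} is literally ``The proof is similar to that of Lemma~\ref{indepen} and we leave it to the reader,'' and the argument you outline---pass to the base field $K(t,w)$, use $\mW$-independence to get algebraic independence of the $c_i$ over $K(t,w)$, then rerun the transcendence-degree count of Lemma~\ref{indepen}---is exactly that. The same computation appears explicitly inside the proof of Theorem~\ref{mainth}, carried out over $\mathbb{C}(t_0,\bar a)$.

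Two small remarks. First, be careful with your identification ``generic fibre of $\pi_2$ $=$ $\mV_{\mathbb{C}(\mW)}$ of dimension $n+1$'': the generic fibre of $\pi_2$ is $\mV\times_S \mathrm{Spec}\,\mathbb{C}(\mW)$, which has dimension $n$ over $\mathbb{C}(\mW)$, not $n+1$ (the $t$-direction has already been absorbed into $\mW$). This does not affect your transcendence-degree argument, which is written over $K(t,w)$ and is correct as stated, but it does mean your lower-bound justification in part~(2) (``intersection of $n$ hypersurfaces in an $(n{+}1)$-dimensional variety'') needs adjusting. Second, for that lower bound the paper's Lemma~\ref{indepen} instead uses the fact that the intersection contains the integral curve of $s_{\mW\times\mV}$ through the generic point; that argument transports verbatim to the present setting and avoids the dimension bookkeeping altogether.
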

\begin{proof} The proof is similar to that of Lemma \ref{indepen} and we leave it to the reader.
\end{proof}


 

\vskip 0.4 cm
The intuitive idea of the lemma above is that the set of preimages of a first integral $f_i$: $\{F_{i,c}\}_{c\in\mathbb{C}}$ will produce a ``slicing'' of $\mathcal{W}\times_S \mathcal{V}$. Having $\mW$-indipendent first integrals means that different first integrals produce slicings that are not parallel on the ``$\mV$-part'' of $\WSV$. Thus, projecting on $\mV$, enough $\mW$-independent first integrals will single out a integral curve of $\mathcal{V}\rightarrow S$.
\vskip 0.4 cm

\begin{lem}\label{funct}
Suppose $(V,s_V)$ is an irreducible $D$-variety defined over $\mathbb{C}(t)$. A rational first integral of $(\WSV,s_{\mW\times\mV})$ corresponds to a definable function (in $DCF_0$) from $(V,s_V)^{\partial}$ to the constants $\mathcal{C}$ defined over $\mathbb{C}(t,\bar{a})$, where $(t,\bar{a})$ is any generic point of $(\mW,s_{\mW})^{\partial}$.
\end{lem}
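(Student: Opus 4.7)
The plan is to produce the definable function directly by evaluation at the chosen generic point. Given a rational first integral $h$ of $(\WSV,s_{\mW\times\mV})$, defined on some Zariski open $U\subseteq \WSV$, and a generic point $(t,\bar{a})$ of $(\mW,s_{\mW})^{\partial}$, I would define
\[ f_h(v) := h(t,\bar{a},v) \]
for those $v\in (V,s_V)^{\partial}$ with $(t,\bar{a},v)\in U$. By construction this is a rational (hence definable) function on its domain, with parameters from $\mathbb{C}(t,\bar{a})$.

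The key verification is that $f_h$ takes values in $\mathcal{C}$. I would apply $\partial$ to $h(t,\bar{a},v)$ using the chain rule in differential algebra. Since $h$ has constant (complex) coefficients, and since $\partial t=1$, $\partial \bar{a}=s_W(\bar{a})$ (because $\bar{a}\in (\mW,s_{\mW})^{\partial}$) and $\partial v=s_V(v)$ (because $v\in (V,s_V)^{\partial}$), one obtains
\[ \partial\bigl(h(t,\bar{a},v)\bigr)=dh\bigl(1,s_W(\bar{a}),s_V(v)\bigr)=dh\bigl(s_{\mW\times\mV}(t,\bar{a},v)\bigr)=0, \]
exactly by the defining property of a rational first integral on $U$. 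Hence $f_h(v)\in \mathcal{C}$, as required.

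A small but necessary bookkeeping step is to check that the domain of $f_h$ is large enough to sensibly call it a definable function. Since $(t,\bar{a})$ is generic in $\mW$, the slice $\{(t,\bar{a})\}\times_S \mV$ meets the open $U$ in a Zariski open dense subset of $\mV$; combined with Zariski density of $(V,s_V)^{\partial}$ in $V(\mathcal{U})$ (from \cite{Buium}, cited earlier), this ensures $f_h$ is defined on a Zariski open dense subset of $(V,s_V)^{\partial}$, which is what is meant by ``definable function'' in the birational framework used throughout the paper.

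The main obstacle, if the lemma is to be read as a genuine correspondence in both directions, is the reverse implication: showing that any $\mathbb{C}(t,\bar{a})$-definable function from $(V,s_V)^{\partial}$ to $\mathcal{C}$ comes from a rational first integral. This requires arguing that such a function is essentially a rational expression $h(t,w,v)$ evaluated at $w=\bar{a}$ — using that on $(V,s_V)^{\partial}$ the derivatives of coordinates reduce, via the $D$-structure, to polynomial expressions in the coordinates, and using elimination of imaginaries to reach a rational witness — and then running the above chain rule computation in reverse to deduce $dh(s_{\mW\times\mV})=0$ identically on $\WSV$ by genericity. For the forward direction, however, the calculation above is essentially the whole argument.
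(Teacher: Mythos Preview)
Your argument is correct and matches the paper's proof essentially line for line: define $f$ by evaluating $h$ at the generic $(t,\bar a)$ and use the chain rule to get $\partial(h)=dh(s_{\mW\times\mV})=0$, and for the converse replace derivatives by the sections $s_W,s_V$ to obtain a rational $h$ and deduce $dh(s_{\mW\times\mV})=0$ on $(\WSV,s_{\mW\times\mV})^{\partial}$, then pass to all of $\WSV$ by Zariski density. The only quibble is that in the converse the ``rational witness'' comes from quantifier elimination (definable functions in $DCF_0$ are differential-rational), not from elimination of imaginaries.
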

\begin{proof}
First, suppose $h:\WSV\rightarrow\mathbb{C}$ is a rational first integral of $\WSV$, i.e. for some open $U\subseteq\WSV$, $dh(s_{\mW\times\mV}(u))=0$ for all $u\in U$. We can extend $h$ to a function $h:(\WSV)(\mathcal{U})\rightarrow\mathcal{U}$ and we show that for any $u\in U\cap(\WSV,s_{\mW\times\mV})^{\partial}$, $\partial(h(u))=0$. Of course such a $u$ is of the form $(t,w,v)$ for $t\in S$ and we have that
\[\begin{array}{rl} \partial(h(t,w,v))&=dh(\partial(t,w,v))\\
&=dh(s_{\mW\times\mV}(t,w,v))\\
&=0.
  \end{array}\]
So, in particular, if we choose $(t,\bar{a})$ a generic point of $(\mW,s_{\mW})^{\partial}$,as we work birationally, we have that $f=h(t,\ovl{a},v)$ as a function of $v\in(V,s_V)^{\partial}$ takes $(V,s_V)^{\partial}$ to the constants and is defined over $\mathbb{C}(t,\bar{a})$.
\par Conversely, suppose $f$ is a definable function from $(V,s_V)^{\partial}$ to the constants defined over $\mathbb{C}(t,\bar{a})$ where $(t,\bar{a})$ is a generic point of $(\mW,s_{\mW})^{\partial}$. So $f$ is a function of the form $f(t,\bar{a},\partial(\bar{a}),\ldots,v,\partial(v),\ldots)$ and if we replace the occurrence of $\partial(\bar{a})$ by $s_W(\bar{a})$ and that of $\partial(v)$ by $s_V(v)$, $f$ is a function $h$, rational in $t$, $\bar{a}$ and $v$. So in particular, as $(t,\bar{a})$ is generic, $h$ is a well define function on some $U\subseteq\WSV$, where $U\subseteq\pi^{-1}_1(p_1^{-1}(V))$, and it remains to show that it is a first integral. But for any $u=(t,w,v)\in U\cap(\WSV,s_{\mW\times\mV})^{\partial}$ we have that
\begin{equation*}
\begin{split}
dh(s_{\mW\times\mV}(t,w,v) )
 & = dh(\partial(t,w,v)\\
 & = \partial(h(t,w,v))\\
& = 0.
\end{split}
\end{equation*}
As $(\WSV,s_{\mW\times\mV})^{\partial}$ is Zariski dense in $(\WSV,s_{\mW\times\mV})$, we are done.
\end{proof}

\begin{lem}\label{AlgInd}
Let $h_1,\ldots,h_n:\WSV\rightarrow\mathbb{C}$ be first integrals of $\mW\times_{\mathbb{A}^1}\mV$. Then $h_1,\ldots,h_n$ are $\mW$-independent  first integrals if and only if
for the corresponding definable functions $f_1,\ldots,f_n:(V,s_V)^{\partial}\rightarrow \mathcal{C}$ given in Lemma \ref{funct} and for any generic point $y\in(V,s_V)^{\partial}$, we have that $f_1(y),\ldots,f_n(y)$ are algebraically independent over $\mathbb{C}(t,\bar{a})$, $(t,\bar{a})$ a generic point of $(\mW,s_{\mW})^{\partial}$.
\end{lem}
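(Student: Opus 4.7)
The plan is to translate both the $\mW$-independence of $h_1,\dots,h_n$ and the algebraic independence of $f_1(y),\dots,f_n(y)$ into a single statement about algebraic independence inside the function field of $\WSV$, using the explicit formula $f_i(v)=h_i(t,\bar{a},v)$ extracted from the proof of Lemma \ref{funct} as the bridge.

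First, I would rephrase $\mW$-independence in function-field language: the definition says exactly that $h_1,\dots,h_n$, viewed as elements of $\mathbb{C}(\WSV)$, are algebraically independent over the subfield $\mathbb{C}(\mW)=\mathbb{C}(t,w)$, since a polynomial relation $\theta(h_1,\dots,h_n)$ with coefficients in $\mathbb{C}(t,w)$ vanishes at a generic point of $\WSV$ iff it vanishes on a Zariski open subset iff it is the zero element of $\mathbb{C}(\WSV)$. Second, I would check that the triple $(t,\bar{a},y)$ is itself a generic point of the complex variety $\WSV$. Since $(\mW,s_{\mW})^{\partial}$ is Zariski dense in $\mW$ and $(V,s_V)^{\partial}$ is Zariski dense in $V$ (Buium), we have $\mathrm{tr.deg}(\mathbb{C}(t,\bar{a})/\mathbb{C})=\dim\mW$ and $\mathrm{tr.deg}(\mathbb{C}(t,\bar{a},y)/\mathbb{C}(t,\bar{a}))=\dim V$, summing to $\dim\mW+\dim V=\dim(\WSV)$. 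Third, evaluation at $(t,\bar{a},y)$ is then a field isomorphism $\mathbb{C}(\WSV)\xrightarrow{\sim}\mathbb{C}(t,\bar{a},y)$ which restricts to $\mathbb{C}(\mW)\xrightarrow{\sim}\mathbb{C}(t,\bar{a})$ and sends $h_i$ to $h_i(t,\bar{a},y)=f_i(y)$. Since algebraic independence over the respective base fields is preserved under this isomorphism, both conditions are equivalent, which is exactly the content of the lemma.

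The main (modest) obstacle is the genericity check in the second step: pinning down that $(t,\bar{a},y)$ is generic in $\WSV$ rather than in some proper subvariety. This forces us to interpret ``$y$ generic in $(V,s_V)^{\partial}$'' as generic \emph{over} $\mathbb{C}(t,\bar{a})$, and not merely over $\mathbb{C}(t)$ --- an independence hypothesis implicit in the statement --- and to invoke the Zariski density of the $\partial$-loci. Once this is in hand, the rest is a direct transcription.
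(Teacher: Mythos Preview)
Your proposal is correct and is essentially the same argument as the paper's, packaged in function-field language: the paper proves the contrapositive by taking a putative dependence $G(f_1(y),\dots,f_n(y))=0$ over $\mathbb{C}(t,\bar a)$, replacing derivatives via $s_W,s_V$, and observing that $(t,\bar a,y)$ is generic in $\WSV$ so that this contradicts $\mW$-independence---exactly the evaluation isomorphism you describe. Your explicit treatment of the genericity of $(t,\bar a,y)$ (including the remark that $y$ must be generic over $\mathbb{C}(t,\bar a)$) is in fact more careful than the paper's one-line assertion of the same point.
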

\begin{proof}
\par Left to right. Suppose $h_1,\ldots,h_n$ are $\mW$-independent and for contradiction suppose 
there exists a polynomial $G\in\mathbb{C}(t,\bar{a})[x_1,\ldots,x_n]$ with $(t,\ovl{a})$ a generic point of $(\mW,s_{\mW})^{\partial}$, such that 
\[G(f_1(t,\ovl{a},y),\ldots,f_n(t,\ovl{a},y))=0,\]
on a generic point $y$ of $(V,s_V)^{\partial}$. If we replace the occurrence of $\partial(\ovl{a})$ by $s_W(\ovl{a})$ and that of $\partial(y)$ by $s_V(y)$ in the $f_i$'s, we get $G(h_1(t,\ovl{a},y),\ldots,h_n(t,\ovl{a},y))=0$ where $G\in\mathbb{C}(t,\ovl{a})[x_1,\ldots,x_n]$. But $(t,\ovl{a},y)$ is then a generic point of $\mW\times_{\mathbb{A}^1}\mV$, with $G(h_1(t,\ovl{a},y),\ldots,h_n(t,\ovl{a},y))=0$, contradicting $\mW$-independence of the $h_i$'s. 
\par Right to left follows from a similar argument. 
\end{proof}

\begin{thm}\label{mainth} Suppose $(V,s_V)$ is an irreducible (rational) $D$-variety of dimension $n$ defined over $\mathbb{C}(t)$. Then $(V,s_V)^{\partial}$ is generically almost internal to $\mathcal{C}$ if and only if there exists a subvariety $\mathcal{W}$ of $\mathcal{V}^m$, for some $m$, together with a vector field $s_{\mW}$ such that $(\WSV,s_{\mW\times\mV})$ has $n$ $\mW$-independent rational first integrals.
\end{thm}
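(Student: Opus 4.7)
The theorem is an ``if and only if'' which essentially packages together the machinery built up in Lemmas \ref{funct} and \ref{AlgInd} and in Remark \ref{rem1}. My plan is to handle the two directions separately, relying on those lemmas to do the translation between the two languages (rational first integrals on the one hand, and constants-valued definable functions on the other).

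For the forward direction, suppose $(V,s_V)^{\partial}$ is generically almost internal to $\mathcal{C}$. By Remark \ref{rem1}(1), almost internality can be witnessed over $L=\mathbb{C}(t,\bar{a})$ for some tuple $\bar{a}$ from $(V,s_V)^{\partial}(\mathbb{C}(t)^{diff})$; let $W$ be the algebraic locus of $\bar{a}$ over $\mathbb{C}(t)^{alg}$, which is a $D$-subvariety of $(V,s_V)^m$ (for $m=|\bar{a}|$) with the induced $s_W$, and let $(\mW,s_{\mW})$ be the corresponding fibration as in Section 3. Remark \ref{rem1}(2) then yields $L$-definable functions $f_1,\dots,f_n:(V,s_V)^{\partial}\to\mathcal{C}$ whose values on a generic $y\in(V,s_V)^{\partial}$ are algebraically independent over $L$. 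I apply Lemma \ref{funct} to convert each $f_i$ into a rational first integral $h_i$ of $(\WSV,s_{\mW\times\mV})$, and Lemma \ref{AlgInd} to upgrade the algebraic independence of the $f_i(y)$ over $L=\mathbb{C}(t,\bar{a})$ to $\mW$-independence of $h_1,\dots,h_n$.

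For the converse, given $(\mathcal{W},s_{\mW})$ and $\mW$-independent rational first integrals $h_1,\dots,h_n$ on $\WSV$, Lemma \ref{funct} produces definable functions $f_i:(V,s_V)^{\partial}\to\mathcal{C}$ over $L=\mathbb{C}(t,\bar{a})$, for $(t,\bar{a})$ a generic point of $(\mW,s_{\mW})^{\partial}$, and Lemma \ref{AlgInd} says that for generic $y\in(V,s_V)^{\partial}$ the constants $f_1(y),\dots,f_n(y)$ are algebraically independent over $L$. Since $\dim(V)=n$ and $y$ is generic over $L$ (after possibly moving to a generic realization, which is fine as we only need almost internality to hold generically), we have $tr.deg(L\langle y\rangle/L)=n$. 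The $n$ constants $f_i(y)\in\mathcal{C}$ already account for this full transcendence degree, forcing $y\in L(f_1(y),\dots,f_n(y))^{alg}\subseteq acl(L,\mathcal{C})$. Fact \ref{fact1}(2) now gives generic almost internality of $(V,s_V)^{\partial}$ to $\mathcal{C}$ over $L$.

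The only subtle point, and the one deserving the most care, is the dimension/transcendence-degree count at the end of the converse direction: one needs to ensure that the $n$ algebraically independent constants $f_i(y)$ together with $L$ really do suffice to algebraically capture $y$, and that we choose a generic $y$ independent from $\bar{a}$ over $\mathbb{C}(t)$ so that $tr.deg(L\langle y\rangle/L)=\dim(V)=n$. Everything else is bookkeeping, carried out by the two preceding lemmas.
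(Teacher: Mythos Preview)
Your proof is correct and follows essentially the same route as the paper: both directions are reduced, via Lemmas \ref{funct} and \ref{AlgInd}, to the transcendence-degree count showing that $n$ algebraically independent constants $f_i(y)\in L(y)$ force $y\in L(f_1(y),\dots,f_n(y))^{alg}$. The only cosmetic difference is that the paper spells out this count as $\dim(\cap_i F_{i,c_i})=0$ and exhibits the witnessing formula $\bigwedge_i f_i(v)=c_i\wedge\partial(v)=s(v)$, whereas you invoke Fact \ref{fact1}(2) directly; also note the paper's own ``Left to right'' and ``Right to left'' labels are swapped relative to the statement, while yours are correct.
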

\begin{proof}
Left to right. Suppose that $\mW$ is as given in the statement and let $h_1,\ldots,h_n:\WSV\rightarrow\mathbb{C}$ be $n$ $\mW$-independent first integrals of $\mW\times_{\mathbb{A}^1}\mV$.\\
Let $(t_0,\bar{a},y)\in(\WSV,s_{\mW\times\mV})^{\partial}$ be generic and let $(c_1,\ldots,c_n)\in\mathcal{C}^n$ be such that $f_i(y)=h_i(t_0,\bar{a},y)=c_i$ for each $i$ and $f_i$ denote the definable function given by Lemma \ref{funct}. Note that by Lemma \ref{AlgInd}, the $f_i(y)$'s are algebraically independent over $\mathbb{C}(t_0,\bar{a})$. We proceed as in the proof of Lemma \ref{indepen} and show that $\dim(\cap_i F_{i,c_i})=0$, where $F_{i,c_i}=f_i^{-1}(c_i)$.\\
As $y$ is a generic point of $(V,s_V)^{\partial}$ over $\mathbb{C}(t_0,\bar{a})$, $tr.deg(\mathbb{C}(t_0,\bar{a},y)/\mathbb{C}(t_0,\bar{a}))=n$. For each $i$, by construction, as $y\in F_{i,c_i}$ we have that $tr.deg(\mathbb{C}(t_0,\bar{a},y)$\\$/\mathbb{C}(t_0,\bar{a},c_i))=n-1$ (the $f_i(y)$'s are transcendental over $\mathbb{C}(t_0,\bar{a})$). We claim that for any $i\neq j$, $tr.deg(\mathbb{C}(t_0,\bar{a},y)/\mathbb{C}(t_0,\bar{a},c_i,c_j))=n-2$. Otherwise, without loss of generality, we would have that $c_i\in\mathbb{C}(t_0,\bar{a},c_j)^{alg}$, so that there exist $\theta(x,c_j)$ in $\mathbb{C}(t_0,\bar{a},c_j)[x]$ with $\theta(c_i,c_j)=0$. This would however contradict the fact that $c_i$ (=$f_i(y)$) and $c_j$ (=$f_j(y)$) are algebraically independent over $\mathbb{C}(t_0,\bar{a})$.\\
Finally, by induction, $tr.deg(\mathbb{C}(t_0,\bar{a},y)/\mathbb{C}(t_0,\bar{a},c_1,\ldots,c_n))=n-n=0$. So the set $\cap_i F_{i,c_i}=\{v\in V(\mathcal{U}):\wedge_i f_i(v)=c_i\}$ is finite. Taking as formula $\wedge_i f_i(v)=c_i\wedge \partial(v)=s(v)$ the result follows.
\par Right to left: from Remark \ref{rem1} we have definable functions $\ovl{f}=(f_i)_{i\in n}:(V_t,s)^{\partial} \rightarrow \mathcal{C}^n$ over $L=\mathbb{C}(t,\bar{a})$ which witness generic almost internality of $(V,s_V)^{\partial}$ to $\mathcal{C}$. Since the new parameters $\ovl{a}$ appear, we need to consider the fibration $(\mW,s_{\mW})\rightarrow(\mathbb{A}^1_\mathbb{C},1)$ complex varieties, with generic fibre the $D$-variety $(W,s_W)$ (over $L=\mathbb{C}(t)^{alg}$) for which $\ovl{a}$ is a generic point. We thus work with the fibration  $(\mW\times_{\mathbb{A}^1}\mV,s_{\mW\times\mV})\rightarrow(\mathbb{A}^1_\mathbb{C},1)$ with vector field $s_{\mW\times\mV}(t,w,v)=(1,s_W(t,w),s_V(t,v))$. 
\par We only have to show that the $f_i$'s give rise to independent first integral of $(\mW\times_{\mathbb{A}^1}\mV,s_{\mW\times\mV})$. But by Lemma \ref{funct}, since the $f_i$'s are defined over $\mathbb{C}(t,\bar{a})$ for some generic point $(t,\bar{a})$ of $(\mW,s_{\mW})^{\partial}$ we have that they correspond to $n$ first integrals $h_1,\ldots,h_n$ of $\mW\times_{\mathbb{A}^1}\mV$. All that remain is to show that the $h_i$'s are independent. By Lemma \ref{AlgInd} this is true if we have that for any generic point $y\in(V,s_V)^{\partial}$ the $f_1(y),\ldots,f_n(y)$ are algebraically independent over $\mathbb{C}(t,\ovl{a})$. But this was already proved in Remark \ref{rem1}(2). Thus the $h_i$'s are $n$ independent first integrals of $\WAV$.
\end{proof}


\vskip 0.5 cm

\begin{exmp}
 A basic example to describe the result is the following linear equation: $y''=4y'-4y$ (equivalently, $s(t,y,y')=(1,y',4y'-4y)=(s_1(y),s_2(y'))$). Its solution is $y(t)=c_1e^{2t}+c_2te^{2t}$. It is clearly internal to the constants, in fact if we consider the field $\mathbb{C}(t)(e^{2t})$ we can express the general solution as a rational function with elements of the field plus constants.

We show now how to find the variety $\mW$ and how to express the first integrals. One first integral is $h(y,y')=\frac{y'-2y}{e^{2t}}$. By considering the geometric variety $(\mW,s_{\mW})$ to be $
                 \mW \rightarrow \mathbb{A}^1
                $ with $s_{\mW}(z)=2z$ (corresponding to the differential equation $z'=2z$ whose solution is $e^{2t}$), we can construct $
                 \WAV \rightarrow \mathbb{A}^1
               $,  and, denoted by $a$ an element of $\mW$, check that $h(t,a,y,y')=\frac{y'-2y}{a}$ is a first integral.

It amounts to show that $dh(s_{\mV})=0$ and indeed\\ $ s_{\mW}(a) \partial h/\partial a+s_1(y)\partial h/\partial y + s_2(y') \partial h/\partial y' =\frac{2y-y'}{a^2}2a -\frac{2y'}{a}+\frac{4y'-4y}{a}=0$.
\end{exmp}

\begin{exmp}

Consider the elliptic equation $(y')^2=4y^3+g_2y+g_3$. We saw in Example \ref{exell} that this can be expressed as a $D$-variety $$\left\{\begin{array}{l}
(y_1)^2=4y_0^3+g_2y_0^2+g_3\\
\partial y_0=y_1\\
\partial y_1=y_2\\
\partial y_2=6y_0+\frac{g_2}{2}.\\
                                                                                                                                          
                                                                                                                                         \end{array}\right.
$$

From \cite{PillSok} Example 0.8 we know that after fixing a generic solution $(\alpha,\alpha')\in (V,s)^{\partial}$, any solution $(\beta,\beta')\in  (V,s)^{\partial}$ is obtained as $(\beta,\beta')=(\alpha,\alpha')\oplus (c,d)$, where $(c,d)\in V(\mathbb{C})$ and $\oplus$ is the usual elliptic curve operation. 

Thus we have a map $(V,s)^{\partial}\rightarrow E(\mathbb{C})\sset \mathbb{C}^2$ witnessing internality; and a first integral of $(y')^2=4y^3+g_2y+g_3$ given by $h:(y,y')\ominus (a,a')$ for $(a,a')$ a particular solution of the same differential equation. 

\end{exmp}

\section{Conclusions and questions}

From Theorem \ref{mainth} and quantifier elimination for $DCF_0$ it is clear that we should be able to obtain a description of the first integrals as rational functions. We show that indeed this is the case and we give a precise reformulation of the theorem in this language. This allows us to connect our result with the Painlev\'e property.
 \vskip 0.2 cm

We need to define in our setting some well known notions: given a fibration $p:(\mathcal{V},s_{\mathcal{V}})\rightarrow (S,1)$, corresponding to a $D$-variety $(V,S_V)$, 
let $t_0\in S$ and $\ovl{y}_0\in p^{-1}(t_0)=\mathcal{V}_{t_0}(\mathbb{C})$. A \emph{local analytic solution} of $p:(\mathcal{V},s_{\mathcal{V}})\rightarrow (S,1)$ through $\ovl{y_0}$ is an analytic section $\ovl{y}:H\rightarrow \mV$, where $H$ is some connected domain in $S$ around $t_0$, such that $\ovl{y}(t_0)=\ovl{y}_0$ and $d \ovl{y}/dt=s_{V}(\ovl{y})$. We call $(t_0,\ovl{y}_0)$ the \emph{initial conditions}. 
\vskip 0.2 cm

Given a local analytic solution on $H$ we want to analytically continue it to the entire Riemann sphere $\mathbb{P}^1=\mathbb{C}\cup\{\infty\}$ of which $S$ is a subset.

If there is a point in the domain around which we cannot analytically continue the solution, we call that point a singularity. If it is a pole (also called regular singularity), solutions in the form of an infinite power series can be found applying Frobenius' method, and no multivaluedness arises.

If the singularity is not a pole (nor removable) we call the singularity critical. In presence of a critical singularity an analytic continuation of the solution can produce multivaluedness, but we can opportunely choose a Riemann surface on which the solution can be made single valued. We call the solution obtained after analytic continuation via a suitable Riemann surface a \emph{particular solution} of $p$.

The \emph{general solution} of $p$ is the set $(\mathcal{V},s_{\mathcal{V}})^{sol}$ of all nonsingular particular solutions. Observe that it is not necessarily true that the particular solutions are defined over the same Riemann surface, and if we can find a unique Riemann surface for the general solution of $p$ we say that $p:(\mathcal{V},s_{\mathcal{V}})\rightarrow (S,1)$ has the Painlev\'e property. As it is costumary we improperly talk about singularities of the general solution to mean the singularities of the Riemann sphere.

Observe that $(\mathcal{V},s_{\mathcal{V}})^{sol}$ is the geometric interpretation of $(V,s_V)^{\partial}$, and it differs from $(\mV, s_{\mV})^{\partial}$ where $\mathcal{V}$ is seen as a $D$-variety.
 
\vskip 0.2 cm

For linear differential equations it is possible to determine, from the equation, the points in the Riemann sphere that are singularities for a particular solution, and any other particular solution will have singularities only at these points.  We can thus find a Riemann surface on which all particular solutions can be defined.

For nonlinear differential equations, though, this is trickier: it can happen that different particular solutions have different singularities on the Riemann sphere.

We need thus to distinguish between singularities of the general solution that are fixed, i.e. they are points on the Riemann sphere such that they are singularities for all particular solutions; and those that are movable, i.e. singularities whose position on the Riemann sphere depends on the particular solution chosen, or, equivalently, on the initial conditions.

Since we can deal with movable poles, we can find a Riemann surface on which any solution can be analytically continued. This is what is traditionally called the Painlev\'e property: $p:(\mathcal{V},s_{\mathcal{V}})\rightarrow (S,1)$ has the Painlev\'e property if the general solution has no movable critical singularity.

As Picard has observed, the only real obstruction to being able to define a unique Riemann surface for all the particular solutions of the equation is the existence of movable singularities that cause multivaluedness, i.e. we allow also the existence of non branching critical singularities.

We therefore reformulate the Painlev\'e property as follows:

\begin{defn}
We say that $p:(\mathcal{V},s_{\mathcal{V}})\rightarrow (S,1)$ has the \emph{Painlev\'e property} if the general solution has no movable branch points.
\end{defn}
\vskip 0.3 cm

An interesting example of an equation with the Painlev\'e property is the first Painlev\'e equation, $y''=6y^2+t$ (see Example \ref{pp1}). 

\vskip 0.3 cm

An equation that does not have the Painlev\'e property  is the following: $y'=-\frac{1}{2}y^3$. 
With initial conditions $(t_0,y_0)=(5,\frac{1}{2})$ the solution through $y_0$ is $y(t)=\frac{1}{\sqrt{t-1}}$.

The general solution can be expressed as $y(t)=\frac{1}{\sqrt{t-c}}$ where $c\in\mathbb{C}$, which has a movable branch point and thus does not have the Painlev\'e property.

\vskip 0.3 cm

The existence of $n$ first integrals for an equation of order $n$ implies that the general solution is parametrized by $\mathbb{C}^n$.  This allows to consider an array of constants $\ovl{c}\in \mathbb{C}^n$ as initial conditions to determine particular solutions from the general solution.  We are interested in determining how many particular solutions are singled out by an array of constants.


It is thus natural to give the following reformulation of Theorem \ref{mainth}:

\begin{thm}\label{restthm}
 Suppose $(V,s_V)$ is an irreducible (rational) $D$-variety of dimension $n$ defined over $\mathbb{C}(t)^{alg}$. Then $(V,s)^{\partial}$ is generically almost internal to $\mathbb{C}$ if and only if there exists a finite-to-one rational map $(\mathcal{V},s_{\mathcal{V}})^{sol}\rightarrow \mathbb{C}^n$ in $\mathbb{C}(t,y,y',\dots,y^{(n-1)},w_1,\dots,w_k)$, where $w_1,\dots, w_k$ are particular solutions $E$. 
\end{thm}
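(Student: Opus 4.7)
The theorem is essentially a translation of Theorem \ref{mainth} into the language of rational maps acting on the space of analytic solutions; hence my plan is to run both implications through Theorem \ref{mainth} and do the additional bookkeeping needed to identify $\mW$-independent rational first integrals of $\WSV$ with finite-to-one rational maps $(\mV,s_\mV)^{sol}\to\mathbb{C}^n$ expressible in the stated coordinates.

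For the forward direction, assume $(V,s_V)^\partial$ is generically almost internal to $\mathcal{C}$. Theorem \ref{mainth} produces an auxiliary variety $\mW\subseteq \mV^m$ with vector field $s_\mW$, together with $n$ $\mW$-independent rational first integrals $h_1,\dots,h_n$ of $(\WSV,s_{\mW\times\mV})$. A generic point $(t,\bar a)$ of $(\mW,s_\mW)^\partial$ is, by construction, a tuple $\bar a=(w_1,\dots,w_k)$ of particular solutions of the original equation (these come from the witnesses $\bar a$ of Remark \ref{rem1}), and since each $h_i$ is a rational function on $\WSV$, after the change of variables of Remark \ref{linear} the $h_i$ belong to $\mathbb{C}(t,y,y',\dots,y^{(n-1)},w_1,\dots,w_k)$. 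Evaluating at $\bar a$ gives a rational map $H=(h_1,\dots,h_n)\colon(\mV,s_\mV)^{sol}\to\mathbb{C}^n$. The finite-to-one property follows from Lemma \ref{indstr}(2): the intersection $\bigcap_i H_{i,c_i}$ restricted to $\mV$ has dimension $1$, i.e.\ it is a single integral curve, which is exactly a single particular solution; thus the fibres of $H$ over a generic $\bar c\in\mathbb{C}^n$ are finite in the space of solutions.

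For the backward direction, suppose we are given a finite-to-one rational map $H=(h_1,\dots,h_n)\colon(\mV,s_\mV)^{sol}\to\mathbb{C}^n$ whose components lie in $\mathbb{C}(t,y,y',\dots,y^{(n-1)},w_1,\dots,w_k)$ for particular solutions $w_1,\dots,w_k$. Let $\mW$ be the Zariski closure of the tuple $\bar a=(w_1,\dots,w_k)$ inside $\mV^k$, endowed with the induced vector field $s_\mW$; then $\bar a$ is a generic point of $(\mW,s_\mW)^\partial$. Because each $h_i$ is constant along analytic solutions, and because $(\WSV,s_{\mW\times\mV})^\partial$ is Zariski dense in $\WSV$, the identity $d h_i(s_{\mW\times\mV})=0$ passes from the solution set to the ambient variety (exactly the argument used in the proof of Lemma \ref{funct}); so each $h_i$ is a rational first integral. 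Finite-to-oneness of $H$ gives algebraic independence of $h_1(t,\bar a,y),\dots,h_n(t,\bar a,y)$ over $\mathbb{C}(t,\bar a)$ on a generic $y$, which by Lemma \ref{AlgInd} is precisely $\mW$-independence. Theorem \ref{mainth} then yields that $(V,s_V)^\partial$ is generically almost internal to $\mathcal{C}$.

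The main obstacle I anticipate is not the model-theoretic content, which is already packaged in Theorem \ref{mainth}, Lemma \ref{indstr} and Lemma \ref{AlgInd}, but the careful translation between three a priori different objects: the Kolchin-type set $(V,s_V)^\partial$ of universal-domain solutions, the set $(\mV,s_\mV)^{sol}$ of genuine analytic particular solutions, and the rational-map picture on $\WSV$ where the first integrals naturally live. The argument that finite fibres in $\mV$ give finite fibres in $(\mV,s_\mV)^{sol}$ (and vice versa) uses that a generic integral curve is irreducible and carries essentially one analytic solution, and it is exactly at this point that one must be explicit about which $w_1,\dots,w_k$ appear and that they can be chosen as analytic particular solutions rather than merely as elements of $\mathcal{U}$; a short lemma comparing Zariski-generic integral curves with analytic continuations along a Riemann surface (as in the discussion preceding the definition of the Painlevé property) suffices to close this gap.
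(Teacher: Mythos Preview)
Your proposal is correct and takes essentially the same approach as the paper: the paper presents Theorem \ref{restthm} explicitly as a reformulation of Theorem \ref{mainth} (together with the remark that quantifier elimination in $\mathrm{DCF}_0$ yields rational first integrals) and gives no further proof, so your derivation via Theorem \ref{mainth}, Lemma \ref{funct}, Lemma \ref{AlgInd} and Lemma \ref{indstr} is exactly the intended unpacking. One small slip: in the forward direction you write that the dimension-$1$ intersection is ``a single integral curve''; in fact it is only a curve, possibly with several irreducible components, which is precisely why the map is finite-to-one rather than injective.
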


This map needs not be injective, since generic almost internality guarantees only that for any chosen set of constants $\ovl{c}\in \mathbb{C}$, finitely many solutions will be in the preimage of the rational map.

If we strenghten our assumption to generic internality, we obtain injectivity:

\begin{cor}\label{genint}
 Suppose $(V,s_V)$ is an irreducible (rational) $D$-variety of dimension $n$ defined over $\mathbb{C}(t)^{alg}$. Then $(V,s)^{\partial}$ is generically internal to $\mathbb{C}$ if and only if there exists an injective rational map $(\mathcal{V},s_{\mathcal{V}})^{sol}\rightarrow \mathbb{C}^n$ in\\ $\mathbb{C}(t,y,y',\dots,y^{(n-1)},w_1,\dots,w_k)$, where $w_1,\dots, w_k$ are particular solutions of $E$.
\end{cor}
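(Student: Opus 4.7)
The plan is to follow the passage from Theorem~\ref{mainth} to Theorem~\ref{restthm} line by line, but everywhere tighten ``finite-to-one'' to ``injective'' by invoking Fact~\ref{fact1}(1) instead of (2): generic internality gives $y\in dcl(L,\mathcal{C})$, not merely $y\in acl(L,\mathcal{C})$, and this is precisely what removes the finite Galois ambiguity present in the almost-internal case.

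For the forward direction, assume generic internality. A $dcl$-adaptation of Remark~\ref{rem1}(1) produces $L=\mathbb{C}(t,\bar a)$, with $\bar a\in(V,s_V)^{\partial}(\mathbb{C}(t)^{diff})$, such that $y\in dcl(L,\mathcal{C})$ for a generic $y$. Using the strong minimality of $\mathcal{C}$ together with standard internality machinery in $\omega$-stable theories (in the internal case the binding group acts trivially, so there are no nontrivial conjugates to code over $L(y)$), we obtain an $L$-definable generic bijection $\Phi=(f_1,\ldots,f_n)\colon(V,s_V)^{\partial}\to U\subseteq\mathcal{C}^n$ whose components take algebraically independent values at a generic $y$; this is the $dcl$-version of Remark~\ref{rem1}(2). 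Building $(\mathcal{W},s_{\mathcal{W}})$ as the locus of $\bar a$ and assembling $(\WSV,s_{\mW\times\mV})$ exactly as in the proof of Theorem~\ref{mainth}, Lemma~\ref{funct} translates each $f_i$ into a rational first integral $h_i$ of $\WSV$, i.e.\ into a rational function in $\mathbb{C}(t,y,y',\ldots,y^{(n-1)},w_1,\ldots,w_k)$. The resulting rational map $(\mathcal{V},s_{\mathcal{V}})^{sol}\to\mathbb{C}^n$ is then of the required form, and it is injective on a Zariski open subset by construction.

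Conversely, given such an injective rational map, Lemma~\ref{funct} turns each of its $n$ components $h_i$ into an $L$-definable function $f_i\colon(V,s_V)^{\partial}\to\mathcal{C}$ with $L=\mathbb{C}(t,\bar a)$ coding the particular solutions $w_1,\ldots,w_k$. Injectivity of the map on a Zariski open subset says that for a generic $y$ the single tuple $(f_1(y),\ldots,f_n(y))$ together with $L$ determines $y$, hence $y\in dcl(L,\mathcal{C})$; Fact~\ref{fact1}(1) then delivers generic internality. The main obstacle I anticipate is the forward-direction upgrade from the finite-to-one map of Theorem~\ref{restthm} to an honest $L$-definable generic bijection onto a subset of $\mathcal{C}^n$: this requires more than the interalgebraic/elimination-of-imaginaries argument of Remark~\ref{rem1}(2) and relies instead on the stronger structural properties of types internal (rather than merely almost internal) to a strongly minimal set in an $\omega$-stable theory.
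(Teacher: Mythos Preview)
The paper gives no proof of this corollary at all; it is stated as an immediate sharpening of Theorem~\ref{restthm}. Your outline is the natural one and is essentially correct: run the machinery of Theorem~\ref{mainth}/Remark~\ref{rem1} with $dcl$ in place of $acl$, using Fact~\ref{fact1}(1) for the converse direction.

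One point deserves correction. Your parenthetical justification for the forward direction, ``in the internal case the binding group acts trivially, so there are no nontrivial conjugates to code over $L(y)$,'' is not right: the binding group $\mathrm{Aut}((V,s)^\partial/\mathcal{C})$ is typically nontrivial in the internal case (that is exactly when differential Galois theory is interesting), and in any event it fixes constants, so it is not the group permuting the conjugates of $\bar d$ over $L(y)$ in the Remark~\ref{rem1}(2) argument. The clean way to produce the $L$-definable generic bijection you need is the following standard step: from $y\in dcl(L,\bar c)$ take an $L$-definable $g$ with $y=g(\bar c)$; the fibre $g^{-1}(y)$ is an $L(y)$-definable subset of $\mathcal{C}^m$, and since $\mathcal{C}$ is stably embedded with induced structure a pure algebraically closed field, elimination of imaginaries in ACF lets you take its code $e$ to be a tuple of constants. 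Then $e\in dcl(L,y)$ and $y\in dcl(L,e)$, so $y$ is interdefinable over $L$ with a tuple from $\mathcal{C}$, and picking a transcendence basis gives the desired $L$-definable injection into $\mathcal{C}^n$. With this fix your forward direction goes through, and the obstacle you flag at the end disappears.
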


Thus generic internality implies that, given the general solution, the choice of an array of constants determines a unique particular solution.

\vskip 0.3 cm

\begin{rmk}
 Equations can at this point be brought back from the first order system form to the usual ODE form, $E:F(t,y,y',\dots,y^{(n)})=0$.\\ The conclusion of the statements above is then that our first integrals can be chosen as rational maps in \\$\mathbb{C}(t,y,\dots,y^{(n-1)},w_1(t),\dots,w_s(t),w_1'(t),\dots,w_s'(t),\dots, w_1^{(r)}(t),\dots,w_s^{(r)}(t))$,\\ for some $s,r$ and $w_1(t),\dots,w_s(t)$ particular solutions of $E$.
\end{rmk}

\vskip 0.3 cm
Recalling the examples above, the first Painlev\'e equation $y''=6y^2+t$, is not generically almost internal to the constants. Indeed as pointed out in \cite{NagPil}, if we let $X$ be the solution set in $\mathcal{U}$ of $y''=6y^2+t$, then as $X$ is strongly minimal and of order $2$, it is orthogonal to the $\mathbb{C}$, and this implies non generic almost internality.
\vskip 0.3 cm

The equation  $y'=-\frac{1}{2}y^3$, instead, has first integral $h(t,y)=t-\frac{1}{y^2}$, mapping an element of the general solution $\frac{1}{\sqrt{t-c}}$ to the constant $c$. 

However, given a constant $c_0$, two particular solutions are in the preimage $h^{-1}(c_0)$:  $\frac{1}{\sqrt{t-c_0}}$ and  $-\frac{1}{\sqrt{t-c_0}}$. As $h$ is finite-to-one but not injective, the equation  $y'=-\frac{1}{2}y^3$  is generically  almost internal to the constants but not generically internal.

\vskip 0.3 cm

The notion of generic algebraic integrability is therefore different from the Painlev\'e property, but the notion of generic internality is the intersection of the two:

\begin{cor}
$(V,s_v)^{\partial}$ is generically internal to $\mathbb{C}$ if and only if it is generically almost internal to $\mathbb{C}$ and $p:(\mathcal{V},s_{\mathcal{V}})\rightarrow (S,1)$ has the Painlev\'e property, 
\end{cor}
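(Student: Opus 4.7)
The plan is to leverage the two preceding results: Theorem \ref{restthm}, characterizing generic almost internality via the existence of a finite-to-one rational map $(\mathcal{V},s_{\mathcal{V}})^{sol} \to \mathbb{C}^n$ in $\mathbb{C}(t,y,\dots,y^{(n-1)},\bar{w})$, and Corollary \ref{genint}, characterizing generic internality via the existence of an \emph{injective} such rational map. The corollary then amounts to saying that the Painlev\'e property is the precise extra condition needed to upgrade ``finite-to-one'' to ``injective''.

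The forward implication is the easier one. Generic internality trivially gives generic almost internality. For the Painlev\'e property, I would invoke Corollary \ref{genint} to write the general solution (locally) as $y = R(t,\bar{c},w_1(t),\dots,w_k(t))$ with $R$ a rational expression and the $w_i$ fixed particular solutions. Rational operations on analytic inputs introduce only poles, never new branch points, so the only multivaluedness of $y$ as a function of $t$ comes from the $w_i$'s; since the $w_i$ are themselves fixed, their branch points are fixed. Hence no movable branch points arise, which is the Painlev\'e property.

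For the reverse implication, suppose generic almost internality together with the Painlev\'e property. By Theorem \ref{restthm} there is a finite-to-one rational map $h:(\mathcal{V},s_{\mathcal{V}})^{sol} \to \mathbb{C}^n$ using some particular solutions $\bar{w}$; equivalently (via Fact \ref{fact1}(2)), the generic solution $y$ is algebraic of some degree $d$ over $L = \mathbb{C}(t,\bar{c},\bar{w})$, and the goal is to reduce $d$ to $1$ after possibly enlarging $\bar{w}$. The $d$ Galois conjugates of $y$ over $L$ are themselves solutions of the equation, and geometrically they are the $d$ sheets of $y$ as a multivalued function of $t$ with $\bar{c}$ fixed at a generic value; they are permuted by the monodromy along loops in $t$-space around branch points. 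The Painlev\'e hypothesis forces every such branch point to be fixed (independent of $\bar{c}$), so the monodromy comes entirely from fixed singularities.

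The key step is to trivialize this monodromy by adjoining further particular solutions. I would fix a generic $\bar{c}_0\in\mathbb{C}^n$ and take the $d$ elements $\tilde{w}_1,\dots,\tilde{w}_d$ of $h^{-1}(\bar{c}_0)$: these are particular solutions of $p$ whose fixed branching in $t$ matches that of $y$. Adjoining them to $\bar{w}$ produces a field $L'$ over which the ambiguity among the $d$ conjugates of $y$ is captured by a single complex constant, so $y\in\mathrm{dcl}(L',\mathcal{C})$ (schematically, one passes from $y = \pm\sqrt{\bar{c}\,p(t)}$ to $y/\sqrt{p(t)}=\pm\sqrt{\bar{c}}\in\mathcal{C}$). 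By Fact \ref{fact1}(1) this yields generic internality and hence, via Corollary \ref{genint}, an injective rational map. The principal obstacle is precisely this last step: verifying rigorously that the field extension which trivializes the Galois/monodromy action can be realized using particular solutions of $p$ itself (rather than of some auxiliary algebraic cover). This is exactly where Painlev\'e is indispensable: the absence of \emph{movable} branching guarantees that the ``roots'' needed to split the extension are themselves solution sections of $p$.
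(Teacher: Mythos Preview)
The paper's own proof is much terser than your proposal: it simply invokes Corollary~\ref{genint} and argues, by contrapositive, that a movable branch point would force the distinct local branches to be identified under any rational first-integral map, contradicting injectivity. In particular, the paper only explicitly argues the direction \emph{internal $\Rightarrow$ Painlev\'e}; the converse (\emph{almost internal $+$ Painlev\'e $\Rightarrow$ internal}) is left as ``immediate'' without justification.

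Your forward argument takes a genuinely different route. Rather than the paper's contrapositive, you use that internality gives $y\in\mathrm{dcl}(L,\mathcal C)$ and hence a rational expression $y=R(t,\bar c,\bar w)$, then observe that rational operations on fixed analytic data cannot create movable branching. This is correct, though the rational expression for $y$ should be justified via Fact~\ref{fact1}(1) (i.e.\ $y\in\mathrm{dcl}(L,\mathcal C)$) rather than by literally inverting the injective map of Corollary~\ref{genint}, since an injective rational map need not have rational inverse. Your version has the advantage of avoiding the paper's somewhat heuristic claim that multivalued branches must coincide under \emph{every} rational first integral; the paper's version has the advantage of brevity.

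For the reverse direction you do substantially more than the paper (which does nothing). Your plan---read the fibre of the finite-to-one map as Galois conjugates, identify the Galois action with monodromy around branch points which by Painlev\'e are fixed, then kill that monodromy by adjoining further particular solutions $\tilde w_1,\dots,\tilde w_d$ over a generic constant---is the right heuristic, and you correctly isolate the genuine difficulty: checking that the splitting extension is generated by solutions of $p$ itself rather than of an auxiliary cover. The paper does not address this step at all, so on this direction your proposal is in fact a more honest account than the published proof; the gap you flag is real, but it is a gap the paper shares.
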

\begin{proof}
 This is immediate from Corollary \ref{genint}, observing that if there is a branch point, by multivaluedness, there are at least two distinct particular solutions such that any first integrals will send them to the same array of constants in $\mathbb{C}^n$, and this contradicts injectivity of the first integrals.  
\end{proof}

We have thus provided an equivalence between a notion of integrability in the model theoretic and in the traditional sense. In this last section we provided some connection with the Painlev\'e property; a question naturally arising, although a bit vague, is the following:

\begin{question}
 Is there a model theoretic notion equivalent to the Painlev\'e property?
\end{question}


\end{document}